\theoremstyle{thmstyleone}%
\newtheorem{theorem}{Theorem}[section]
\newtheorem{corollary}[theorem]{Corollary}
\newtheorem{lemma}[theorem]{Lemma}
\begin{document}

\title[An upper bound for the minimum modulus in a squarefree covering]{An upper bound for the minimum modulus in a covering system with squarefree moduli}


\author[1]{\fnm{Maria} \sur{Cummings}}\email{mcummings780@gmail.com}

\author*[1]{\fnm{Michael} \sur{Filaseta}}\email{filaseta@math.sc.edu}

\author[1]{\fnm{Ognian} \sur{Trifonov}}\email{trifonov@math.sc.edu}

\affil*[1]{\orgdiv{Department of Mathematics}, \orgname{University of South Carolina}, \orgaddress{\street{1523 Greene Street}, \city{Columbia}, \state{SC}, \postcode{29208},  \country{USA}}}


\abstract{Based on work of P.~Balister, B.~Bollobás, R.~Morris, J.~Sahasrabudhe and M.~Tiba, we show that if a covering system has distinct squarefree moduli, then the minimum modulus is at most 118.  We also show that in general the \textit{k}-th smallest modulus in a covering system with distinct moduli (provided it is required for the covering) is bounded by an absolute constant.}

\keywords{covering system, squarefree, minimum modulus problem}



\maketitle

\section{Introduction}

A covering system (or a covering) is a finite system of congruences $x \equiv a_{j} \pmod{m_{j}}$, $j \in \{ 1, 2, \ldots, r \}$, such that every integer satisfies at least one of the congruences.
Of particular interest is the case in which all the moduli are distinct.  
In 1950, P.~Erd{\H o}s \cite{ref3} wrote, ``It seems likely that for every $c$ there exists such a system all the moduli of which are $> c$."
In other words, Erd{\H o}s felt that the minimum modulus in a covering system having distinct moduli can be arbitrarily large.
Establishing whether that is indeed the case has become known as the minimum modulus problem for covering systems,
and Erd{\H o}s offered \$1000 for a resolution to the problem \cite[Section F13]{guyversion3}.
The minimum modulus problem has only fairly recently been resolved by R.~Hough \cite{ref9} who showed, contrary to what Erd{\H o}s believed, the minimum
modulus is bounded and in particular $\le 10^{16}$. 
More recent work by P.~Balister, B.~Bollob{\'a}s, R.~Morris, J.~Sahasrabudhe and M.~Tiba \cite{ref2} has brought to light some new ideas which have led to 
a simpler argument producing an upper bound of $615999$ on the minimum modulus.

This paper concerns the related question of covering systems with distinct moduli that are all squarefree, that is each modulus is not divisible by the square of a prime.
In \cite{ref9} and \cite{ref1}, Hough and Balister, Bollob{\'a}s, Morris, Sahasrabudhe and Tiba, respectively, use the case of squarefree moduli to illustrate their approaches
to the minimum modulus problem.  In particular, the latter give a simple exposition of their \textit{distortion method} by showing that the minimum modulus in the case of
distinct squarefree moduli is bounded.  They end their paper by indicating that a direct application of what is written there gives a ``(fairly terrible) bound of roughly exp(10200) for the minimum modulus in a covering system" having distinct squarefree moduli.  As they note, finding a small bound in this case is not really the point, given the more interesting minimum modulus problem in which the moduli are not necessarily squarefree is resolved now with a bound of $615999$ on the minimum modulus.

Nevertheless, Erd{\H o}s pointed out that the congruences
\begin{align*}
    \begin{split}
    n &\equiv 0{\hskip -3pt}\pmod{2}\\
    n &\equiv 0 {\hskip -3pt}\pmod{3}\\
    n &\equiv 0 {\hskip -3pt}\pmod{5}\\
    n &\equiv 1 {\hskip -3pt}\pmod{6}\\
    n &\equiv 0 {\hskip -3pt}\pmod{7}
    \end{split}
    \begin{split}
    n &\equiv 1 {\hskip -3pt}\pmod{10}\\
    n &\equiv 1 {\hskip -3pt}\pmod{14}\\
    n &\equiv 2 {\hskip -3pt}\pmod{15}\\
    n &\equiv 2 {\hskip -3pt}\pmod{21} \\
    n &\equiv 23 {\hskip -3pt}\pmod{30}
    \end{split}
    \hspace{10mm}
    \begin{split}
    n &\equiv 4 {\hskip -3pt}\pmod{35}\\
    n &\equiv 5 {\hskip -3pt}\pmod{42} \\
    n &\equiv 59 {\hskip -3pt}\pmod{70}\\
    n &\equiv 104 {\hskip -3pt}\pmod{105}
    \end{split}
\end{align*}
provide an example of a covering in which the moduli are distinct squarefree numbers, and it is unknown as to whether there is such an example where
the minimum modulus is $> 2$.  This problem, due to J.~Selfridge, goes back to at least 1981, being cited with the example above in \cite[Section F13]{guyversion1}.  Thus, the
problem of obtaining a good upper bound on the minimum modulus problem in the case of distinct squarefree moduli is an interesting one.

In this paper, we make the following progress on this question.

\begin{theorem}\label{thm118}
Every covering system with distinct squarefree moduli has a minimum modulus which is $\le 118$.
\end{theorem}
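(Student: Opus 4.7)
The plan is to adapt the distortion method of Balister, Bollobás, Morris, Sahasrabudhe and Tiba, but sharpen it substantially in the squarefree setting. Suppose for contradiction that $\{x \equiv a_j \pmod{m_j}\}_{j=1}^r$ is a covering system with distinct squarefree moduli, all satisfying $m_j \ge 119$. Let $\mathcal{P} = \{p_1 < p_2 < \cdots < p_t\}$ be the set of primes dividing some $m_j$, and set $N = \prod_{i=1}^{t} p_i$. We work inside $\mathbb{Z}/N\mathbb{Z}$, where each congruence $x \equiv a_j \pmod{m_j}$ picks out a set of size $N/m_j$, and the covering hypothesis says these sets have union $\mathbb{Z}/N\mathbb{Z}$.

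First I would order the moduli by their largest prime factor, writing $P^+(m)$ for this quantity, and process the covering in stages indexed by the primes $p_i$, from largest to smallest. At stage $i$ one handles all moduli with $P^+(m_j) = p_i$; squarefreeness then forces $m_j = p_i d$ with $d$ squarefree and $P^+(d) < p_i$. The core of the method is to carry through the stages a probability measure $\mu_i$ on $\mathbb{Z}/N\mathbb{Z}$, supported on residues not yet covered, that distorts the uniform distribution by a product of local weights at each prime in $\mathcal{P}$. The weights are tuned so that at every stage the measure of residues killed by the congruences of that stage is controlled, while leaving enough mass behind for later stages. I would set up these weights essentially as in the squarefree exposition of \cite{ref1}, but with a more aggressive parameter choice informed by the quantitative ideas from \cite{ref2}.

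The decisive inequality is the stage bound: if $S_i$ is the set of residues covered at stage $i$, then $\mu_{i-1}(S_i)$ is bounded by a sum over squarefree $d$ with $P^+(d) < p_i$ and $p_i d \ge 119$, of an explicit local factor depending on the weights at the primes dividing $d$. Summing these bounds over $i$ and using the contradiction hypothesis must yield a total strictly less than $1$, contradicting the fact that the uncovered set is empty at the end. The constraint $p_i d \ge 119$ eliminates, for each small $p_i$, a long list of small $d$; this is precisely the place where the threshold $118$ enters, and shrinking it by even one unit would add allowed $(p_i, d)$ pairs and break the computation.

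The main obstacle, and the crux of the paper, is to optimize the distortion weights at the small primes $2,3,5,7,11,\dots$ so that the sum of stage contributions falls below $1$ when the threshold is $118$ but does not fall below $1$ with the cleaner weights used in \cite{ref1}. I expect the final verification to require a finite but delicate numerical optimization over a bounded set of small primes, possibly computer-assisted, together with a tail estimate showing that stages corresponding to large $p_i$ contribute a summable and small total. Combining these two pieces gives the contradiction and hence the theorem.
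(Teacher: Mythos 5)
Your plan is essentially the paper's proof: the distortion method of \cite{ref1} applied to the squarefree case, with moduli grouped by their largest prime factor, a stage-by-stage bound on the measure of the newly covered set whose total must fall below $1$, optimized distortion parameters $\delta_j$ at the small primes, a computer-assisted computation over a bounded range of primes, and a Rosser--Schoenfeld-type tail estimate for the large primes. One correction: the stages must be processed from the \emph{smallest} prime upward, not from the largest downward as you wrote, since the inductive weight construction distorts the measure along the new fiber direction $S_k$ at stage $k$ and requires that the sets handled at stage $k$ be determined by the first $k$ coordinates (also, the covered elements are not given weight zero once the covered proportion of a fiber exceeds $\delta_k$, so the measure is not literally supported on the uncovered residues).
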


\noindent
Our arguments are not novel in that we basically take the eloquent exposition given by Balister, Bollob{\'a}s, Morris, Sahasrabudhe and Tiba in \cite{ref1} and 
simply refine the arguments to produce our upper bound.  

There is a natural question of whether a bound can be given on the second, third, etc., smallest modulus of a covering system with distinct (possibly squarefree) moduli. 
We give a simple elementary argument that the following is a consequence of R.~Hough's initial work on this topic \cite{ref9}.

\begin{theorem}\label{secondresult}
Fix a non-negative integer $k$.  Then there exists a $B(k+1)$
satisfying the following.
Let $C$ be a covering with
moduli $m_{1} < m_{2} < \cdots < m_{r}$ and congruences
\begin{equation}\label{congrs}
x \equiv a_{j} \pmod{m_{j}}, \quad 1 \le j \le r,
\end{equation}
satisfying $r \ge k+1$ and the first $k$ congruencies in \eqref{congrs} do not
form a covering of the integers.  Then $m_{k+1} \le B(k+1)$.
\end{theorem}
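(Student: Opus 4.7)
The plan is to proceed by induction on $k$. The base case $k=0$ is Hough's theorem: the minimum modulus of any covering with distinct moduli is bounded by an absolute constant $H$, which gives $B(1)=H$. For the inductive step, assume $B(1),\ldots,B(k)$ exist, and let $C$ be a covering whose first $k$ congruences do not form a covering. For each $j\le k$, the first $j-1$ congruences also fail to form a covering (else adjoining additional congruences would still cover), so the inductive hypothesis applied at each level yields $m_j\le B(j)\le B(k)$. Therefore $M:=\operatorname{lcm}(m_1,\ldots,m_k)$ is bounded above by the constant $L:=\operatorname{lcm}(1,2,\ldots,B(k))$, a quantity depending only on $k$.

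Next, pick a residue class $a^*\pmod M$ not covered by the first $k$ congruences; the progression $a^*+M\mathbb{Z}$ must then be covered by the congruences with $j>k$. The substitution $x=a^*+My$ transforms each surviving congruence $x\equiv a_j\pmod{m_j}$ meeting this progression into a congruence $y\equiv c_j\pmod{m_j'}$, where $m_j':=m_j/\gcd(m_j,M)$, and the induced system covers $\mathbb{Z}$ in $y$. The $y$-moduli need not be distinct, but their multiplicity is controlled: since $m_j=m_j'\cdot\gcd(m_j,M)$ and the $m_j$'s are distinct, each value of $m_j'$ is attained by at most $\tau(M)\le\tau(L)$ indices $j$, one for each admissible divisor of $M$.

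The main step, and the principal obstacle, is to bound the minimum modulus of this $y$-covering of $\mathbb{Z}$ in the presence of possibly repeated moduli. A natural route is to partition the relevant indices $j>k$ according to the value $d=\gcd(m_j,M)$: within each of the at most $\tau(M)$ resulting subsystems, the moduli $m_j'$ are automatically distinct. Since the $\tau(M)$ subsystems jointly cover $\mathbb{Z}$, by pigeonhole at least one of them covers a subset of $\mathbb{Z}$ of natural density $\ge 1/\tau(L)$; a density version of Hough's argument (distinct moduli covering a set of positive density force the minimum modulus to be bounded in terms of that density, extracted from the distortion/entropy estimates in \cite{ref9}) then yields a constant $H^{*}=H^{*}(L)$ such that some $j_0>k$ satisfies $m_{j_0}'\le H^{*}$. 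Since $m_{k+1}\le m_{j_0}=m_{j_0}'\cdot\gcd(m_{j_0},M)\le H^{*}\cdot L$, the constant $B(k+1):=H^{*}L$ suffices, completing the induction.
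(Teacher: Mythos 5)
Your overall strategy (induct on $k$, bound $m_1,\ldots,m_k$ and hence $M=\mathrm{lcm}(m_1,\ldots,m_k)$ by the inductive hypothesis, pass to a full residue class $a^*\pmod M$ missed by the first $k$ congruences, and analyze the induced system on that progression) is sensible, and your bookkeeping for the substitution $x=a^*+My$ and the multiplicities of the moduli $m_j'=m_j/\gcd(m_j,M)$ is correct. However, the step you yourself flag as ``the main step, and the principal obstacle'' is a genuine gap, not a routine extraction. The lemma you need --- that congruences with distinct moduli, all exceeding $T$, can cover a set of natural density at least $\delta$ only if $T$ is bounded in terms of $\delta$ --- is not proved in \cite{ref9} and does not follow by inspection from Hough's argument: his method (like the distortion method of \cite{ref1,ref2}) shows that the uncovered set is nonempty when the minimum modulus is large, i.e., that it contains at least one residue class modulo the lcm of all the moduli, which yields a density lower bound only of order $1/\mathrm{lcm}$, and that lcm is unbounded. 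A density lower bound for the uncovered set that is uniform over all such systems is a substantially deeper theorem (it is essentially the main result of a separate paper of Balister, Bollob\'as, Morris, Sahasrabudhe and Tiba on the density of the uncovered set, cited in this paper only as related work). Quoting that theorem would make your induction close, but you cannot treat it as a corollary of the $k=0$ statement you are inducting from, and as written the proof does not establish it.

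The paper's proof avoids any density statement. Arguing by contradiction, if $m_{k+1}$ were unbounded then, since there are only finitely many choices for $m_1,\ldots,m_k$, one could extract infinitely many coverings $C_1,C_2,\ldots$ sharing the same first $k$ moduli, whose first $k$ congruences fail to cover, and whose sets of moduli exceeding $m_k$ are pairwise disjoint and all lie above $B(1)$. For each $C_i$, the congruences with moduli $>m_k$ cover some full residue class $a\pmod{m_1\cdots m_k}$, and translating every residue by $b-a$ makes them cover any prescribed class $b$. Patching together $m_1\cdots m_k$ of the coverings $C_i$, one residue class each, produces a single honest covering of $\mathbb{Z}$ by congruences with distinct moduli all $>B(1)$, contradicting the plain $k=0$ case. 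So the paper trades your single-covering density analysis for a pigeonhole over infinitely many hypothetical coverings, and in doing so needs nothing beyond Hough's original theorem; if you want to salvage your route instead, you must import the density-of-the-uncovered-set theorem as an external input.
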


This leads to some further questions.  
Our proof can be adjusted easily to be constructive, though we will make no attempt to find explicit bounds on $B(k)$ as our approach is undoubtedly not optimal.  
How small can one make such bounds?  In particular, what is the smallest possible value of $B(k)$?  Theorem~\ref{secondresult} implies the same result holds in the case that the
moduli $m_{j}$, $1 \le j \le k$, are all squarefree.  What are the answers to these questions in the case of squarefree moduli?  Knowing the various moduli in a covering
system are bounded as in Theorem~\ref{secondresult} but taking into account that the number of moduli in a covering system can be arbitrarily large, is it possible to give a complete
classification of all possible covering systems involving distinct moduli?  A similar question can be asked in the case that the moduli are not distinct, though this will necessarily be a larger classification.

Before leaving this introduction, we mention some related literature.
Some other recent work on this subject include
\cite{balisteretal,balisteretal2,wilson,hough2,nielsen,owens}.
More information related to covering systems with distinct squarefree moduli can be found in Kruckenberg's dissertation \cite{krukenberg}.
For example, he shows that the only covering systems with distinct squarefree moduli  
where the least common multiple $L$ of the moduli is the product of $4$ or fewer primes is when $L = 210$
and that there is a covering system with distinct squarefree moduli which uses the modulus $2$ but not the modulus $3$.  
A number of applications of covering systems beyond the paper by Erd{\H o}s \cite{ref3} can be found in the references given in \cite{juillerat}.


\section{Preliminary background}\label{sec2}

In this section, as well as the next, our approach is based on the work of P. Balister, B. Bollobás, R. Morris, J. Sahasrabudhe, and M. Tiba in \cite{ref1}
where they give the basic idea behind their method in \cite{ref2} but restrict to the case where the moduli are distinct squarefree numbers.  
Our goal is simply to refine the ideas to allow for the bound $118$ in Theorem~\ref{thm118}.  
However, we will make use of weighted sums instead of probabilities to give a slightly different perspective without any real change in content.  

We begin with the following.

\begin{lemma}\label{covcheck}
Let $\mathcal{C}$ be a system of congruences consisting of moduli $m_1,\ldots, m_r$, and set $L=\text{lcm}(m_1,\ldots, m_r)$.
Then, $\mathcal{C}$ is a covering system if and only if every integer in $[1,L]$ is satisfied by a congruence in $\mathcal{C}$.
\end{lemma}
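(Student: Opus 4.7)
The plan is to prove both implications directly, with essentially all the content living in the ``if'' direction.

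For the ``only if'' direction, if $\mathcal{C}$ covers every integer, then in particular every integer in $[1,L]$ is satisfied by some congruence in $\mathcal{C}$. No work is needed here.

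For the ``if'' direction, the key observation is that each $m_j$ divides $L$, so the congruence $x \equiv a_j \pmod{m_j}$ is invariant under shifts by $L$. The plan is: take an arbitrary integer $n$, and write $n = qL + n'$ with $n' \in [1,L]$ (choosing the representative $n' = L$ instead of $0$ when $L \mid n$, to stay inside the stated interval). By hypothesis, $n'$ satisfies at least one congruence in $\mathcal{C}$, say $n' \equiv a_j \pmod{m_j}$. Since $m_j \mid L$, we get $qL \equiv 0 \pmod{m_j}$, hence $n = qL + n' \equiv a_j \pmod{m_j}$, so $n$ is also covered.

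There is really no obstacle here; the only minor bookkeeping point is the choice of representatives so that the interval $[1,L]$ (rather than $[0,L-1]$) actually contains a representative of every residue class modulo $L$, which is handled by sending $0 \pmod L$ to $L$. I would present the proof in a few lines without separating the directions into formal cases.
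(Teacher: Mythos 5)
Your argument is correct and is essentially identical to the paper's proof: both reduce an arbitrary integer $n$ to its representative in $[1,L]$ and use $m_j \mid L$ to transfer the covering congruence back to $n$. No issues.
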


For example, consider the covering: 
\begin{align*}
\begin{split}
n &\equiv 0 {\hskip -3pt}\pmod{2}\\
n &\equiv 0 {\hskip -3pt}\pmod{3}\\
n &\equiv 1 {\hskip -3pt}\pmod{4}
\end{split}
\begin{split}
n &\equiv 3 {\hskip -3pt}\pmod{8}\\
n &\equiv 7 {\hskip -3pt}\pmod{12}\\
n &\equiv 23 {\hskip -3pt}\pmod{24}.
\end{split}
\end{align*}
Here $L=24$.
Lemma~\ref{covcheck} implies that we only need to check that each integer from $1$ to $24$ 
satisfies at least one congruences to verify that the above is a covering system, and this can easily be done.

\begin{proof}[Proof of Lemma \ref{covcheck}]
Let $\mathcal{C}$ be a system of congruences with the least common multiple of the moduli equal to $L$.
If $\mathcal{C}$ is a covering system, then every integer in $[1,L]$ satisfies a congruence in $\mathcal{C}$.
Now, suppose every integer in $[1,L]$ satisfies a congruence in $\mathcal{C}$.
Let $n$ be an integer.
Let $a\equiv n\pmod{L}$ where $1\le a\le L$.
Then, there exists a congruence $x\equiv b\pmod{m}$ in $\mathcal{C}$ such that $a\equiv b\pmod{m}$.
Since $a\equiv n\pmod{L}$ and $m$ divides $L$, we have $n\equiv a\equiv b\pmod{m}$.
Therefore, $n$ satisfies the congruence $x\equiv b\pmod{m}$ in $\mathcal{C}$.
Thus, $\mathcal{C}$ is a covering system, and the lemma follows.
\end{proof}

We return to the example given in the introduction involving squarefree moduli, where the least common multiple of the moduli is $L = 210 = 2 \cdot 3 \cdot 5 \cdot 7$.  
One can use Lemma~\ref{covcheck} to quickly verify that the congruences given there form a covering system of the integers. 
We can also think of covering the associated elements of $Q=S_1\times S_2\times S_3\times S_4$ where $S_1=\{1,2\}$, $S_2=\{1,2,3\}$, $S_3=\{1,2,3,4,5\}$, and $S_4=\{1,2,3,4,5,6,7\}$.
With this approach each congruence covers a portion of $Q$.
For example, the congruence $n \equiv 0 \pmod{2}$ covers $\{2\}\times S_2\times S_3\times S_4 \subseteq Q$.
By the Chinese Remainder Theorem, each integer in $[1,L]$ uniquely corresponds to an element of $Q$.
For example, the integer $77$ corresponds to $(1,2,2,7)$ in $Q$ since $77\equiv 1 \pmod{2}$, $77\equiv 2 \pmod{3}$, $77\equiv 2  \pmod{5}$, and $77\equiv 7\pmod{7}$.
Given Lemma~\ref{covcheck}, for distinct squarefree moduli, we can view a covering system 
as a system of congruences with the product of the moduli $L$ for which each element of $Q$ corresponds to an integer that satisfies at least one of the congruences.

In general, we let $S_1,\ldots, S_n$ be finite sets.
We define a \textit{hyperplane} to be $A=Y_1\times\cdots\times Y_n$ where $Y_j\subseteq S_j$ and $\lvert Y_j\rvert\in\{1,\lvert S_j \rvert\}$ for $j\in\{1,\ldots,n\}$.
We also define two hyperplanes $A$ and $A'$ to be \textit{parallel} if $F(A)=F(A')$ where $F(A)=\{j:\lvert Y_j \rvert=1\}$.
We call $F(A)$ the set of fixed coordinates of $A$.
We consider the set of natural numbers $\mathbb{N}$ to be the set of positive integers.

\begin{theorem}[{P. Balister, B. Bollobás, R. Morris, J. Sahasrabudhe, and M. Tiba \cite{ref1}}]\label{theorem1}
For every sequence of finite sets $S_1,S_2,\ \ldots\ $, each of size at least $2$, satisfying 
$\liminf_{k\rightarrow\infty} \lvert{S_k}\rvert/k>3,$ there is a positive integer $C$ such that the following holds.
Let $\mathcal{A}$ be a collection of hyperplanes that cover $Q=S_1\times\ldots\times S_n$ for some $n\in\mathbb{N}$, 
(that is, every element of $Q$ is on some hyperplane in $\mathcal{A}$).
Suppose no two hyperplanes in $\mathcal{A}$ are parallel.
Then, there exists a hyperplane $A\in\mathcal{A}$ with $F(A)\subseteq \{1,\ldots,C\}$.
\end{theorem}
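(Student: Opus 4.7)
The plan is to argue by contradiction: suppose every $A \in \mathcal{A}$ satisfies $F(A) \cap \{C+1,\ldots,n\} \neq \emptyset$, and exhibit a probability measure $\mu$ on $Q$ for which $\mu\bigl(\bigcup_{A \in \mathcal{A}} A\bigr) < 1$, contradicting the fact that $\mathcal{A}$ covers $Q$. The non-parallel hypothesis is crucial here: for each subset $F \subseteq \{1,\ldots,n\}$ there is at most one $A \in \mathcal{A}$ with $F(A) = F$, so the union bound reduces the total measure to
\[
\mu\Bigl(\bigcup_{A \in \mathcal{A}} A\Bigr) \;\le\; \sum_{\substack{F \subseteq \{1,\ldots,n\} \\ F \cap \{C+1,\ldots,n\} \neq \emptyset}} \mu(A_F),
\]
where $A_F$ denotes the unique hyperplane (if any) in $\mathcal{A}$ with fixed-coordinate set $F$. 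The job is to pick $\mu$ so that this sum is strictly less than $1$ once $C$ is large enough, independent of $n$.

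First I would observe that the naive choice, the uniform product measure $\mu_j(y) = 1/|S_j|$, is insufficient: every hyperplane $A$ then has $\mu(A) = \prod_{j \in F(A)} 1/|S_j|$, and the resulting tail sum is essentially $\prod_{j=1}^n (1 + 1/|S_j|) - \prod_{j=1}^{C} (1 + 1/|S_j|)$, which grows without bound under the hypothesis $|S_k| \gtrsim 3k$. This motivates the distortion method of Balister, Bollob\'as, Morris, Sahasrabudhe and Tiba: replace $\mu$ by a \emph{random} product measure $\mu = \mu_1 \times \cdots \times \mu_n$, where each $\mu_j$ depends on an independently chosen ``center'' $s_j \in S_j$ and concentrates mass near $s_j$ in a prescribed, coordinate-dependent way. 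The point is to arrange that, for \emph{any} fixed hyperplane $A$ with fixed values $(y_j)_{j \in F(A)}$, the expectation $\mathbb{E}[\mu(A)]$ picks up an extra factor of roughly $1/k$ from each coordinate $k \in F(A)$, beyond the uniform contribution $1/|S_k|$.

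Once such a distortion is in place, the key estimate takes the schematic form
\[
\mathbb{E}\Bigl[\sum_{F :\, \max F > C} \mu(A_F)\Bigr] \;\le\; \sum_{k > C} \frac{\alpha}{k\,|S_k|} \prod_{j < k} \Bigl(1 + \frac{\beta}{|S_j|}\Bigr),
\]
for absolute constants $\alpha, \beta > 0$. Under $\liminf |S_k|/k > 3$ the product over $j < k$ grows only as a small power of $k$, while the prefactor $1/(k|S_k|)$ decays faster than $1/k^{2}$, so the tail is summable and can be made less than $1$ by choosing $C$ large. Averaging then yields a single realization of the distortion for which $\mu(\bigcup_{A \in \mathcal{A}} A) < 1$, giving the desired contradiction. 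The main obstacle is the construction of the distortion itself: one must design the random measures $\mu_j$ and accompanying union-bound bookkeeping so that each large fixed coordinate genuinely contributes an expected gain of the right order, and it is precisely in quantifying this per-coordinate gain against the product $\prod_{j<k}(1 + \beta/|S_j|)$ that the threshold $\liminf |S_k|/k > 3$ becomes the critical hypothesis.
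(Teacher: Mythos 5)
Your high-level frame is right --- argue by contradiction, build a measure under which the union of the hyperplanes has mass less than $1$, and exploit non-parallelism so that each fixed-coordinate set contributes at most one hyperplane --- but the engine you propose to drive it does not work, and it is not the one used here. If $\mu=\mu_1\times\cdots\times\mu_n$ is a product measure whose factors are random but exchangeable in each coordinate (for instance, built from a uniformly random ``centre'' $s_j\in S_j$), then for every \emph{fixed} $y\in S_j$ one has $\mathbb{E}[\mu_j(y)]=1/\lvert S_j\rvert$ exactly, because $\sum_{y\in S_j}\mu_j(y)=1$ and all elements of $S_j$ are treated symmetrically; hence $\mathbb{E}[\mu(A)]=\prod_{j\in F(A)}1/\lvert S_j\rvert$, and the extra factor of roughly $1/k$ per fixed coordinate that your whole estimate rests on is unobtainable in the first moment. (There is also no meaning to ``concentrating mass near $s_j$'' in an abstract finite set.) Nor can you fix this by biasing a deterministic product measure away from the offending values: $\mathcal{A}_k$ may contain up to $2^{k-1}$ hyperplanes whose fixed values in coordinate $k$ exhaust $S_k$ many times over. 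Finally, even granting some per-coordinate gain, bounding $\mu(B_k)$ by the sum of $\mu(A)$ over individual $A\in\mathcal{A}_k$ is too lossy for large $k$; the paper uses that union bound only for small $k$.

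The actual mechanism differs in two essential ways. First, the measure is neither a product measure nor randomized: it is built adaptively, coordinate by coordinate, with the weight on each fibre $F_x$ redistributed according to the fraction $\alpha_k(x)$ of that fibre covered by $B_k=\bigcup_{A\in\mathcal{A}_k}A$, subject to a cap $\delta_k$ on the distortion. Second, the cost of stage $k$ is controlled by a second-moment bound, $w_k(B_k)\le\frac{1}{4\delta_k(1-\delta_k)}\sum_{x}\alpha_k(x)^2w_{k-1}(x)$ (Lemma~\ref{sflemma2}), and the second moment is expanded over \emph{pairs} of hyperplanes in $\mathcal{A}_k$ (Lemma~\ref{sflemma3}); it is this pair-counting, via $\sum_{F_1\cup F_2=J}1=3^{\lvert J\rvert}$, that simultaneously produces the crucial $1/\lvert S_k\rvert^2$ decay and the constant $3$ in the hypothesis $\liminf_{k\to\infty}\lvert S_k\rvert/k>3$. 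Your closing admission that you cannot pin down where the threshold $3$ enters is the symptom of the missing idea: it enters through the second-moment argument, not through any per-coordinate distortion gain.
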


The above result is the main result in \cite{ref1}.  To connect it to our earlier discussion,
let $\mathcal{C}$ be a covering system with distinct squarefree moduli. 
Let $p_j$ denote the $j^{\rm th}$ prime, and set $S_j=\{1,...,p_j\}$.
Observe that each $S_j$ is of size at least $2$, and we have $\liminf_{k\rightarrow\infty} \lvert S_k \rvert/k=\lim_{k\rightarrow\infty} p_k/k=\infty >3$.
Let $C$ be a positive integer as in Theorem~\ref{theorem1}.  
Let $p_n$ be the largest prime dividing a modulus in $\mathcal{C}$.
Set $Q=S_1\times\cdots\times S_n$.
Each congruence $x\equiv a\pmod{m}$ in $\mathcal{C}$ corresponds to a hyperplane 
$A_m=Y_1\times\cdots\times Y_n\subseteq Q$ where (i) if $p_j$ divides $m$, then $Y_j=\{b\}$ with $b\equiv a \pmod{p_j}$ and $b\in S_j$, 
and (ii) if $p_j$ does not divide $m$, then $Y_j=S_j$.
Then, the covering $\mathcal{C}$ corresponds to a finite collection of hyperplanes $\mathcal{A}$ which covers $Q$.
Note that the moduli of $\mathcal{C}$ are distinct, so the hyperplanes in $\mathcal{A}$ are pairwise non-parallel.
Thus, by Theorem~\ref{theorem1}, there exists an  $A_m\in\mathcal{A}$ with $F(A_m)\subseteq \{1,\ldots,C\}$.
Observe that this $m$ divides $p_1\cdots p_C$.
Therefore, we can obtain that the minimum modulus must be $\le p_1\cdots p_C$.

Our goal is to slightly alter the proof of Theorem~\ref{theorem1} in \cite{ref1} to give our proof of Theorem~\ref{thm118}.


\section{Further background}\label{sec3}

We will prove Theorem \ref{theorem1} by expanding upon the method outlined in \cite{ref1}.
Let $S_1,S_2,\ldots$ be an infinite sequence of finite sets.
As discussed at the end of the previous section, we will be connecting these sets to our covering system in Theorem~\ref{thm118} by taking
$S_j=\{1,...,p_j\}$, for each $j$, where $p_{j}$ denotes the $j^{\rm th}$ prime. 
For a positive integer $k$, define $Q_k=S_1\times \ldots \times S_k$.
Fix a positive integer $n$.
Let $\mathcal{A}$ be a collection of hyperplanes, pairwise non-parallel, that cover $Q_n$.
We define a \textit{weight} on a set $X=\{x_1,\ldots,x_k\}$ to be a function mapping each $x_i$ to $q_i\geq 0$ for $1\le i\le k$ such that $q_1+\ldots +q_k=1$. 
As we are setting the sum of the weights $q_{i}$ equal to $1$, these weights can be viewed as a probability assigned to the elements of $X$, as done in \cite{ref1}. 

Let $Q=Q_n$.
We define weights $w_n(x)$ on the elements $x$ of $Q$.
From the previous section, for covering systems, these weights correspond to weights on the integers in the interval $[1,L]$ where $L=p_1\cdots p_n$.
The weight of a subset $T\subseteq Q$ is defined as the sum of the weights of the elements in $T$, so $w_n(T)=\sum_{x\in T} w_n(x)$.
We interpret this to mean $w_n(\emptyset)=0$.
If $x=(a_1,\ldots,a_{k-1})\in S_1\times \ldots\times S_{k-1}$ and $y\in S_k$, then we write $w_k(x,y)=w_k((a_1,\ldots,a_{k-1},y))=w_n(A)$ where $A$ is the hyperplane 
\begin{align}\label{generalhyperplane}
A=\{a_1\}\times \{a_2\}\times\ldots\times \{a_{k-1}\}\times \{y\}\times S_{k+1}\times \ldots \times S_n.
\end{align}
In general, if $X\subseteq S_1\times \ldots\times S_k$, then we identify $w_k(X)$ with $w_n(X\times S_{k+1}\times \ldots \times S_n)$.
The fiber $F_x$ associated to $x=(a_1,\ldots,a_{k-1})\in S_1\times \ldots\times S_{k-1}$ is the set of tuples $(a_1,\ldots,a_{k-1},y)$ with $y \in S_{k}$.
At the $k$th stage, we will determine the weights of the hyperplanes in the form of \eqref{generalhyperplane}.
We define 
\[ \mathcal{A}_k=\{A\in \mathcal{A}:\max(F(A))=k\}. \]
In the languages of congruences, $\mathcal{A}_1$ corresponds to the set of congruences modulo $p_1=2$, $\mathcal{A}_2$ corresponds to the set of congruences modulo $p_2=3$ and $p_1p_2 = 6$, and so on.
We also define 
\[ B_k=\bigcup_{A\in\mathcal{A}_k} A. \]
With regard to coverings, $B_k$ corresponds to the elements of $Q_n$ which are covered by a congruence with a modulus whose largest prime divisor is $p_k$.
Note that if $A\in\mathcal{A}_k$, then $F(A)\subseteq \{1,\ldots,k\}$, so $B_k$ can also be thought of as a subset of $Q_k=S_1\times \ldots\times S_k$.

In our proof of Theorem \ref{theorem1}, by assigning weights to elements of $Q$ in the manner below and supposing $F(A)\not\subseteq\{1,\ldots,C\}$ for every hyperplane $A\in\mathcal{A}$, we prove that the collection of hyperplanes $\mathcal{A}$ does not cover $Q$ to obtain our result by contradiction.

For each $k$, we will choose $\delta_k\in[0,1/2]$.
We define weights $w_k$ inductively as follows.
As noted above, we can view $B_k$ as a subset of $Q_k=S_1\times \ldots\times S_k$, and do so.
When $k=1$, if $y\in S_1$ and $\lvert B_1\rvert /\lvert S_1\rvert \le \delta_1$, we set
\begin{align*}
w_1(y)=\begin{cases}
        0\quad &\text{if} \, y\in B_1\\
        \dfrac{1}{\lvert S_1\rvert -\lvert B_1\rvert } \quad &\text{if} \, y \not\in B_1.
        \end{cases}
\end{align*}
If $y\in S_1$ and $\lvert B_1\rvert /\lvert S_1\rvert > \delta_1$, we set
\begin{align*}
w_1(y)=\begin{cases}
          \dfrac{(\lvert B_1\rvert /\lvert S_1\rvert )-\delta_1}{(\lvert B_1\rvert /\lvert S_1\rvert )(1-\delta_1)}\cdot \dfrac{1}{\lvert S_1\rvert } \quad &\text{if} \, y \in B_1 \\[8pt]
          \dfrac{1}{1-\delta_1}\cdot \dfrac{1}{\lvert S_1\rvert } \quad &\text{if} \, y \not\in B_1. \\
     \end{cases}
\end{align*}
Observe that in both cases, we have $\sum_{y\in S_1}w_1(y) = 1$; for example, if $\lvert B_1\rvert /\lvert S_1\rvert > \delta_1$, then
\begin{align*}
\sum_{y\in S_1}w_1(y) &= \sum_{\substack{y\in S_1 \\ y \in B_1}} w_1(y) + \sum_{\substack{y\in S_1 \\ y \not\in B_1}} w_1(y) \\
&= \sum_{\substack{y \in B_1}} \dfrac{(\lvert B_1\rvert /\lvert S_1\rvert )-\delta_1}{(\lvert B_1\rvert /\lvert S_1\rvert )(1-\delta_1)}\cdot \dfrac{1}{\lvert S_1\rvert } 
 + \sum_{\substack{y\in S_1 \\ y \not\in B_1}}  \dfrac{1}{1-\delta_1}\cdot \dfrac{1}{\lvert S_1\rvert } \\
&= \dfrac{(\lvert B_1\rvert /\lvert S_1\rvert )-\delta_1}{(\lvert B_1\rvert /\lvert S_1\rvert )(1-\delta_1)}\cdot \dfrac{\lvert B_1\rvert}{\lvert S_1\rvert } 
 +  \dfrac{1}{1-\delta_1}\cdot \dfrac{\lvert S_1\rvert - \lvert B_1\rvert}{\lvert S_1\rvert } 
 = 1.
 \end{align*}
The above weights correspond to setting $k=1$ and replacing $\alpha_{1}(x)$ with $\lvert B_1\rvert /\lvert S_1\rvert $ and $w_0(x)$ with $1$ in the discussion below. 

Suppose $k\geq 2$ and $w_{k-1}$ is defined on $Q_{k-1}$.
For each $x\in Q_{k-1}$, we define
\begin{align*}
  \alpha_k(x)=\frac{\lvert \{y\in S_k:(x,y)\in B_k\}\rvert }{\lvert S_k\rvert }=\frac{\lvert F_x\cap B_k\rvert }{\lvert S_k\rvert },  
\end{align*}
which is the proportion of the fiber $F_x=\{(x,y):y\in S_k\}$ that is covered by one or more hyperplanes in $\mathcal{A}_k$.
If $\alpha_k(x)\le \delta_k$, we set
\begin{align*}
w_k(x,y)=\begin{cases}
          0 \quad &\text{if} \, (x,y) \in B_k \\[8pt]
          \dfrac{1}{1-\alpha_k(x)}\cdot \dfrac{w_{k-1}(x)}{\lvert S_k\rvert } \quad &\text{if} \, (x,y) \not\in B_k. \\
     \end{cases}
\end{align*}
If $\alpha_k(x)> \delta_k$, we set 
\begin{align*}
w_k(x,y)=\begin{cases}
          \dfrac{\alpha_k(x)-\delta_k}{\alpha_k(x)(1-\delta_k)}\cdot \dfrac{w_{k-1}(x)}{\lvert S_k\rvert } \quad &\text{if} \, (x,y) \in B_k \\[8pt]
          \dfrac{1}{1-\delta_k}\cdot \dfrac{w_{k-1}(x)}{\lvert S_k\rvert } \quad &\text{if} \, (x,y) \not\in B_k. \\
     \end{cases}
\end{align*}

In both the cases $\alpha_k(x)\le \delta_{k}$ and $\alpha_k(x) > \delta_{k}$, we justify that $\sum_{y\in S_k} w_k(x,y)=w_{k-1}(x)$, 
so weight is preserved along the fibers with each increase of $k$.
If $x$ is an element of $Q_{k-1}$ and $\alpha_k(x)\le \delta_{k}$, we have
\begin{align*}
\sum_{y\in S_k}w_k(x,y)&=\sum\limits_{\substack{y\in S_k \\ (x,y)\in B_k}} w_k(x,y)+\sum\limits_{\substack{y\in S_k \\ (x,y)\not\in B_k}} w_k(x,y) \\
&= \sum\limits_{\substack{y\in S_k \\ (x,y)\in B_k}} 0+\sum\limits_{\substack{y\in S_k \\ (x,y)\not\in B_k}} \frac{1}{1-\alpha_k(x)}\cdot\frac{w_{k-1}(x)}{\lvert S_k\rvert } \\
&= \frac{1}{1-\alpha_k(x)}\cdot\frac{w_{k-1}(x)}{\lvert S_k\rvert }\sum\limits_{\substack{y\in S_k \\ (x,y)\not\in B_k}} 1 \\
&=\frac{1}{1-\alpha_k(x)}\cdot\frac{w_{k-1}(x)}{\lvert S_k\rvert }\cdot (\lvert S_k\rvert -\lvert S_k\rvert \alpha_k(x))\\[5pt]
&=w_{k-1}(x),
\end{align*}
where we have used the definition of $\alpha_{k}(x)$ in the second from the last equality. 
Also, if $x$ is an element of $Q_{k-1}$ and $\alpha_k(x)>\delta_{k}$, we then have
\begin{align*}
\sum_{y\in S_k}w_k(x,y)&=\sum\limits_{\substack{y\in S_k \\ (x,y)\in B_k}} w_k(x,y)+\sum\limits_{\substack{y\in S_k \\ (x,y)\not\in B_k}} w_k(x,y) \\
&= \sum\limits_{\substack{y\in S_k \\ (x,y)\in B_k}} \frac{\alpha_k(x)-\delta_{k}}{\alpha_k(x)(1-\delta_{k})}\cdot\frac{w_{k-1}(x)}{\lvert S_k\rvert }+\sum\limits_{\substack{y\in S_k \\ (x,y)\not\in B_k}} \frac{w_{k-1}(x)}{\lvert S_k\rvert (1-\delta_{k})}  \\
&= \frac{\alpha_k(x)-\delta_{k}}{\alpha_k(x)(1-\delta_{k})}\cdot\frac{w_{k-1}(x)}{\lvert S_k\rvert }\sum\limits_{\substack{y\in S_k \\ (x,y)\in B_k}} 1+ \frac{w_{k-1}(x)}{\lvert S_k\rvert (1-\delta_{k})}\sum\limits_{\substack{y\in S_k \\ (x,y)\not\in B_k}}1  \\
&=\frac{\alpha_k(x)-\delta_{k}}{\alpha_k(x)(1-\delta_{k})}\cdot\frac{w_{k-1}(x)}{\lvert S_k\rvert }\cdot \alpha_k(x)\lvert S_k\rvert \\
&\qquad \qquad \qquad +\frac{w_{k-1}(x)}{\lvert S_k\rvert (1-\delta_{k})}(\lvert S_k\rvert -\lvert S_k\rvert \alpha_k(x)) \\[5pt]
&=\frac{(\alpha_k(x)-\delta_{k})w_{k-1}(x)}{1-\delta_{k}}+\frac{(1-\alpha_k(x))w_{k-1}(x)}{1-\delta_{k}}\\[5pt]
&=w_{k-1}(x).
\end{align*}
Thus, weight is preserved along the fibers with each increase of $k$, so that in particular we have $\sum_{x \in Q_k}w_k(x) = 1$ since as already noted the equation holds for $k = 1$.
In other words, as we extend our definition of the weights from $w_{k-1}(x)$ for $x \in Q_{k-1}$ 
to $w_{k}(x)$ for $x \in Q_{k}$, we maintain the property that the sum of all the weights is $1$.  
With $\mathcal A$ and $Q = Q_{n}$ as in Theorem~\ref{theorem1}, note that
each hyperplane $A \in \mathcal A$ belongs to exactly one set $B_{k}$ for $1 \le k \le n$.  
Thus, the following holds.

\begin{lemma}\label{newlemma1}
Let $\mathcal{A}$ be a collection of hyperplanes in $Q=S_1\times\ldots\times S_n$.
If 
\[
\sum_{k=1}^n w_k(B_k) < 1,
\]
then $\mathcal{A}$ does not cover $Q$.
\end{lemma}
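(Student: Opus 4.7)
The plan is to establish the contrapositive: assuming $\mathcal{A}$ covers $Q$, I would show that $\sum_{k=1}^n w_k(B_k) \ge 1$. The whole argument reduces to a one-line union bound once the bookkeeping is in place, so the work is really to collect the right pieces from the preceding discussion.

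First I would record that $w_n(Q_n) = 1$. This is already essentially done above: the base case $\sum_{y \in S_1} w_1(y) = 1$ is verified explicitly, and the two fiber computations show $\sum_{y \in S_k} w_k(x,y) = w_{k-1}(x)$ for every $x \in Q_{k-1}$. A straightforward induction on $k$ then promotes this to $\sum_{x \in Q_k} w_k(x) = 1$, giving in particular $w_n(Q) = 1$.

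Next I would unpack the covering hypothesis. Since every hyperplane $A \in \mathcal{A}$ has $\max F(A) \in \{1,\ldots,n\}$, the collection decomposes as $\mathcal{A} = \bigsqcup_{k=1}^n \mathcal{A}_k$, as already noted in the paragraph just before the lemma. Consequently, if $\mathcal{A}$ covers $Q$, then
\[
Q \;=\; \bigcup_{A \in \mathcal{A}} A \;=\; \bigcup_{k=1}^{n} B_k,
\]
where each $B_k$ is identified with $B_k \times S_{k+1} \times \cdots \times S_n \subseteq Q$ under the convention introduced earlier; this same convention yields the equality $w_n(B_k) = w_k(B_k)$.

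The final step is subadditivity of $w_n$. The inductive definition only ever produces non-negative weights (every branch is either $0$ or a non-negative multiple of $w_{k-1}(x)/|S_k|$, noting that $\alpha_k(x) \ge \delta_k$ in the relevant case), so a union bound gives
\[
1 \;=\; w_n(Q) \;=\; w_n\!\Big(\bigcup_{k=1}^{n} B_k\Big) \;\le\; \sum_{k=1}^{n} w_n(B_k) \;=\; \sum_{k=1}^{n} w_k(B_k),
\]
which is the contrapositive of the claim. I do not anticipate a genuine obstacle; the only points worth a second look are the identification $w_n(X \times S_{k+1} \times \cdots \times S_n) = w_k(X)$ and the non-negativity of the weights, both of which follow immediately from the definitions.
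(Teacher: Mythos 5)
Your proposal is correct and follows essentially the same reasoning the paper intends: the paper states Lemma~\ref{newlemma1} without a formal proof, treating it as an immediate consequence of the preceding observations that $\sum_{x\in Q_k} w_k(x)=1$ and that every $A\in\mathcal{A}$ lies in exactly one $B_k$. Your write-up simply makes explicit the contrapositive, the identification $w_n(B_k)=w_k(B_k)$, and the union bound via non-negativity of the weights, all of which match the paper's implicit argument.
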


The basic idea therefore is to show the inequality in Lemma~\ref{newlemma1} 
when $\mathcal{A}$ comes from a set of congruences with distinct squarefree moduli $> 118$.  
This idea describes the basic approach of the authors in \cite{ref1, ref2} as well.


\section{Upper bounds on $w_{k}(B_{k})$}\label{sec3}

For any element $x\in Q_{k-1}$ and any element $y\in S_k$, we justify that 
\begin{equation}\label{upperbduno}
  w_k(x,y)\le \frac{1}{1-\delta_k}\cdot\frac{w_{k-1}(x)}{\lvert S_{k} \rvert } \qquad \text{for } k \ge 2.
\end{equation} 
In the case that $k = 1$, by considering $\lvert B_{1} \rvert/\lvert S_{1} \rvert \le \delta_{1}$ and $\lvert B_{1} \rvert/\lvert S_{1} \rvert > \delta_{1}$ 
separately, similar to the argument which follows for $k \ge 2$, one can easily verify \eqref{upperbduno} with $w_{0}(x)$ replaced by $1$.  
For $k \ge 2$ and $\alpha_k(x)\le \delta_k$, we have 
\begin{align*}
  w_k(x,y)\le \frac{1}{1-\alpha_k(x)}\cdot \frac{w_{k-1}(x)}{\lvert S_{k} \rvert }\le   \frac{1}{1-\delta_k}\cdot \frac{w_{k-1}(x)}{\lvert S_{k} \rvert }.
\end{align*}
If $k \ge 2$ and $\alpha_k(x)>\delta_k$ and $(x,y)\not\in B_k$, our result holds by the definition of $w_k(x,y)$.
If $k \ge 2$ and $\alpha_k(x)>\delta_k$ and $(x,y)\in B_k$, then we obtain
\begin{align*}
    w_k(x,y)&\le \frac{\alpha_k(x)-\delta_k}{\alpha_k(x)(1-\delta_k)}\cdot \frac{w_{k-1}(x)}{\lvert S_{k} \rvert }\\[5pt]
    &= \left( \frac{1}{1-\delta_k}-\frac{\delta_k}{\alpha_k(x)(1-\delta_k)}\right)\cdot \frac{w_{k-1}(x)}{\lvert S_{k} \rvert }\\[5pt]
    &\le\frac{1}{1-\delta_k}\cdot \frac{w_{k-1}(x)}{\lvert S_{k} \rvert }.
\end{align*}
Thus, \eqref{upperbduno} holds.

For a hyperplane $A=Y_1\times \ldots \times Y_n$ and a set $U\subseteq\{1,\ldots,n\}$, 
we define $A^U=Y_1^U\times \ldots \times Y_n^U$ to be the hyperplane with $Y_i^U=Y_i$ if $i\in U$ and $Y_i^U=S_i$ if $i\not\in U$.
We set $A'=A^{\{1,\ldots,k-1\}}$.
For each $J\subseteq\{1,\ldots,n\}$, we define 
\[
\nu(J)=\prod_{j\in J} \dfrac{1}{(1-\delta_j)\lvert S_{j} \rvert }
\qquad \text{and} \qquad
 \Vert J \Vert=\prod_{j\in J} \lvert S_{j} \rvert .
\]
To clarify, we set, as usual, empty products to be $1$ so that $\Vert \emptyset \Vert = 1$.

If $A$ is a hyperplane corresponding to some congruence with squarefree modulus $m$ in a covering system, then $\Vert F(A) \Vert = m$.  
We are interested in showing that the modulus of some congruence is bounded above by $C_0 = 118$.  
We use $C_{0}$ instead of $118$ for the moment to clarify that most of what is done below is independent of the value of $C_{0}$,
so the reader can view this for the time being as a variable to be determined.  
We assume 
\begin{equation}\label{normbound}
\Vert F(A) \Vert > C_0
\qquad \text{for all }
A \in \mathcal A,
\end{equation}
with a goal of obtaining a contradiction.


Our next estimate will help us formulate a bound on $w_k(B_k)$ which we will use when $k$ is small.

\begin{lemma}\label{sflemma1}
Let $\mathcal{A}$ be a collection of hyperplanes, pairwise non-parallel.
Then, for $k\geq 1$, we have
\begin{align*} 
w_k(B_k)\le \sum_{A\in\mathcal{A}_k} w_k(A)\le \sum_{A\in\mathcal{A}_k}\nu(F(A))= \sum_{A\in\mathcal{A}_k} \prod_{j\in F(A)} \frac{1}{(1-\delta_j)\lvert S_{j} \rvert }.
\end{align*}
\end{lemma}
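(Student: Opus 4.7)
The plan is to prove the chain of inequalities in three moves, of which only the middle one carries any real content. The first inequality $w_k(B_k)\le \sum_{A\in\mathcal{A}_k} w_k(A)$ is immediate from subadditivity of the non-negative set function $w_k$ applied to the union $B_k=\bigcup_{A\in\mathcal{A}_k}A$, and the final equality is nothing more than the definition of $\nu(F(A))$. Neither step uses the pairwise non-parallel hypothesis, which appears not to be needed for this particular lemma.

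The heart of the matter is the bound $w_k(A)\le \nu(F(A))$ for each $A\in\mathcal{A}_k$. I would establish a slightly more flexible statement by induction on $k$: for any product set $A=Y_1\times\cdots\times Y_k\subseteq Q_k$ with $Y_j\in\{S_j,\{a_j\}\}$ for each $j$ and $F=\{j:\lvert Y_j\rvert=1\}$, one has
\[
w_k(A)\;\le\;\prod_{j\in F}\frac{1}{(1-\delta_j)\lvert S_j\rvert}.
\]
The base $k=1$ is a direct check against the two-case definition of $w_1$: if $Y_1=S_1$, then $F=\emptyset$ and $w_1(S_1)=1$ by the normalization already verified; if $Y_1=\{a\}$ and $\lvert B_1\rvert/\lvert S_1\rvert>\delta_1$, the factor $(\lvert B_1\rvert/\lvert S_1\rvert-\delta_1)/(\lvert B_1\rvert/\lvert S_1\rvert)$ is at most $1$, and otherwise $w_1(a)$ is either $0$ or $1/(\lvert S_1\rvert-\lvert B_1\rvert)\le 1/\bigl((1-\delta_1)\lvert S_1\rvert\bigr)$ since $\lvert S_1\rvert-\lvert B_1\rvert\ge (1-\delta_1)\lvert S_1\rvert$.

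For the inductive step, I would split on whether $k\in F$. If $k\notin F$, so $Y_k=S_k$, the fiber-preservation identity $\sum_{y\in S_k} w_k(x,y)=w_{k-1}(x)$ already established in Section~\ref{sec3} gives $w_k(A)=w_{k-1}(Y_1\times\cdots\times Y_{k-1})$, and the induction hypothesis closes the case since then $F\subseteq\{1,\ldots,k-1\}$. If $k\in F$, write $Y_k=\{a_k\}$ and apply the pointwise bound \eqref{upperbduno} term by term to
\[
w_k(A)\;=\;\sum_{x\in Y_1\times\cdots\times Y_{k-1}} w_k(x,a_k)\;\le\;\frac{1}{(1-\delta_k)\lvert S_k\rvert}\, w_{k-1}(Y_1\times\cdots\times Y_{k-1}),
\]
after which the induction hypothesis finishes the argument. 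Specializing to $A\in\mathcal{A}_k$, whose set $F(A)$ lies in $\{1,\ldots,k\}$, yields exactly $w_k(A)\le \nu(F(A))$. The proof is essentially bookkeeping; since the only genuine analytic inputs, namely \eqref{upperbduno} and the fiber-preservation identity, are already in hand, I do not expect any serious obstacle.
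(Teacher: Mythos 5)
Your proof is correct and follows essentially the same path as the paper's: the first inequality by subadditivity, the middle bound $w_k(A)\le\nu(F(A))$ by induction on $k$ using the pointwise estimate \eqref{upperbduno}, and the final step by definition of $\nu$. The only cosmetic difference is that your induction ranges over all hyperplanes in $Q_k$ and splits explicitly on whether $k\in F$, whereas the paper restricts the inductive statement to hyperplanes with $\max F(A)=k$ and handles the other case implicitly via the weight-preserving identification of $w_j$ with $w_{k-1}$; your observation that the pairwise non-parallel hypothesis is unused here is also correct.
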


\begin{proof}
Since $B_k=\bigcup_{A\in\mathcal{A}_k} A$, we have $w_k(B_k)\le \sum_{A\in\mathcal{A}_k} w_k(A)$.
We will induct on $k$ to prove $w_k(A)\le \nu(F(A))$ for $A\in\mathcal{A}_k$.
For the induction, we will want more generally to look at hyperplanes not necessarily in $\mathcal A$ as well.
For this reason, we denote by $\mathcal A^{\text{all}}_{k}$ the set of all hyperplanes $A$ in $S_{1} \times S_{2} \times \cdots \times S_{k}$
(or equivalently in $S_{1} \times S_{2} \times \cdots \times S_{n}$)
for which $\max(F(A)) = k$.   We justify by induction that 
\begin{equation}\label{couldbealemma}
w_k(A) \le \nu(F(A)) = \prod_{j\in F(A)} \frac{1}{(1-\delta_j)\lvert S_{j} \rvert }
\qquad \text{for all } A \in \mathcal A^{\text{all}}_{k}.
\end{equation}
For our base case, consider $k=1$. 
With $k = 1$ and $A \in \mathcal{A}^{\text{all}}_k$, we see that $F(A)= \{1\}$. 
Since $F(A) = \{1\}$, we obtain $A = \{y\}$ (or equivalently $A = \{ y \} \times S_{2} \times \cdots \times S_{n}$) for some $y \in S_1$. 
The comment about the case $k=1$ after \eqref{upperbduno} now implies
\[
w_1(A)=w_1(y) \le \frac{1}{1-\delta_1}\cdot \frac{1}{\lvert S_{1} \rvert } = \nu(\{1\}).
\]
Thus, \eqref{couldbealemma} holds when we restrict to $A \in \mathcal A^{\text{all}}_{1}$.

For our inductive step, suppose that for some $k \in \{ 2, \ldots, n \}$, we have 
$w_{j}(A) \le \nu(F(A))$ whenever $A \in \mathcal A^{\text{all}}_{j}$, where $1 \le j < k$.
Let $A \in \mathcal A^{\text{all}}_{k}$.  
As before, we have $k\in F(A)$.
With $A'=A^{\{1,\ldots,k-1\}}$,
we obtain from \eqref{upperbduno} that
\begin{align*}
w_k(A) \le \frac{1}{(1-\delta_k)\lvert S_{k} \rvert }\cdot w_{k-1}(A').
\end{align*}
Since $A'\subseteq\{1,\ldots,k-1\}$ and $F(A')=F(A)\setminus \{k\}$, then by our inductive hypothesis, we have $w_{k-1}(A')\le \nu(F(A)\setminus\{k\})$.
Thus, for $k\geq 1$, we have
\[
w_k(A) \le\frac{1}{(1-\delta_k)\lvert S_{k} \rvert }\cdot w_{k-1}(A')
\le \frac{1}{(1-\delta_k)\lvert S_{k} \rvert }\cdot \nu(F(A)\setminus\{k\})
=\nu(F(A)).
\]
This completes the induction argument.

We are now able to conclude
\begin{align*}
w_k(B_k)\le \sum_{A\in\mathcal{A}_k} w_k(A)\le \sum_{A\in\mathcal{A}_k}\nu(F(A)) = \sum_{A\in\mathcal{A}_k} \prod_{j\in F(A)} \frac{1}{(1-\delta_j)\lvert S_{j} \rvert }  ,
\end{align*}
which completes our proof.
\end{proof}

\begin{corollary}\label{corsflemma1}
Let $\mathcal{A}$ be a collection of hyperplanes, pairwise non-parallel,
satisfying \eqref{normbound}. 
Then
\begin{align*} 
w_k(B_k)&\le \frac{1}{(1-\delta_k)\lvert S_{k} \rvert } \sum_{\substack{J \subseteq \{1,\ldots,k-1\} \\ \Vert J \Vert > C_0/\lvert S_{k} \rvert }} \nu(J) \\
&= \frac{1}{(1-\delta_k)\lvert S_{k} \rvert } \sum_{\substack{J \subseteq \{1,\ldots,k-1\} \\ \Vert J \Vert > C_0/\lvert S_{k} \rvert }} \prod_{j\in J} \frac{1}{(1-\delta_j)\lvert S_{j} \rvert }.
\end{align*}
\end{corollary}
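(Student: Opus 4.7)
The plan is to apply Lemma \ref{sflemma1} and then rewrite its right-hand side as a sum indexed by subsets $J \subseteq \{1, \ldots, k-1\}$. First, Lemma \ref{sflemma1} yields
\[
w_k(B_k) \le \sum_{A \in \mathcal{A}_k} \nu(F(A)) = \sum_{A \in \mathcal{A}_k} \prod_{j \in F(A)} \frac{1}{(1-\delta_j)\lvert S_j \rvert}.
\]
By definition of $\mathcal{A}_k$, every $A \in \mathcal{A}_k$ has $\max(F(A)) = k$, so $k \in F(A)$ and we may write $F(A) = J_A \cup \{k\}$ with $J_A \subseteq \{1, \ldots, k-1\}$. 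Splitting off the $j = k$ factor in $\nu(F(A))$ shows
\[
\nu(F(A)) = \frac{1}{(1-\delta_k)\lvert S_k \rvert}\,\nu(J_A),
\]
so the scalar $1/((1-\delta_k)\lvert S_k \rvert)$ can be pulled outside the sum.

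Next I would exploit the pairwise non-parallel assumption: distinct hyperplanes in $\mathcal{A}_k$ have distinct fixed-coordinate sets $F(A)$, and therefore distinct sets $J_A$. Consequently the map $A \mapsto J_A$ is an injection from $\mathcal{A}_k$ into the power set of $\{1,\ldots,k-1\}$, so we may re-index the sum by the family $\mathcal{J} = \{J_A : A \in \mathcal{A}_k\}$:
\[
w_k(B_k) \le \frac{1}{(1-\delta_k)\lvert S_k \rvert} \sum_{J \in \mathcal{J}} \nu(J).
\]

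Finally I would invoke \eqref{normbound}. Since $\Vert F(A) \Vert = \Vert J_A \Vert \cdot \lvert S_k \rvert$ and $\Vert F(A) \Vert > C_0$ for every $A \in \mathcal{A}$, every $J \in \mathcal{J}$ satisfies $\Vert J \Vert > C_0/\lvert S_k \rvert$. Enlarging the index set to all $J \subseteq \{1,\ldots,k-1\}$ with $\Vert J \Vert > C_0/\lvert S_k \rvert$ is a valid upper bound since each $\nu(J) \ge 0$, and this delivers the stated inequality. I do not anticipate any genuine obstacle here: the argument is a routine rearrangement followed by a monotone enlargement of the index set, and the substantive work has already been done inside Lemma \ref{sflemma1}. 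The point of the corollary is simply to repackage that lemma in a form convenient for later estimates, where one wants to control $w_k(B_k)$ by a combinatorial sum over subsets subject only to the size constraint coming from \eqref{normbound}.
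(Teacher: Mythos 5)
Your proof is correct and follows essentially the same route as the paper's own argument: apply Lemma~\ref{sflemma1}, write $F(A)=J\cup\{k\}$, pull out the $j=k$ factor, use pairwise non-parallelism to get an injection $A\mapsto J_A$, translate \eqref{normbound} into $\Vert J\Vert > C_0/\lvert S_k\rvert$, and enlarge the index set using nonnegativity of $\nu$.
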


\begin{proof}
For $A \in \mathcal{A}_k$, we write
\[
F(A) = J \cup \{ k \},
\]
where $J \subseteq \{1,\ldots, k-1\}$.  For such $A$ and $J$, we have
\[
\Vert F(A) \Vert = \Vert J \Vert \cdot \lvert S_{k} \rvert .
\]
In particular, $\Vert F(A) \Vert > C_0$ is equivalent to $\Vert J \Vert > C_0/\lvert S_{k} \rvert $.  
Also, since the hyperplanes in $\mathcal A$ are pairwise non-parallel, different $A \in \mathcal{A}_k$ correspond to different $J \subseteq \{1,\ldots, k-1\}$.  
Since every $A \in \mathcal{A}_k$ satisfies \eqref{normbound}, the result follows from Lemma~\ref{sflemma1}.
\end{proof}

For our next bound, we define the weighted sum 
\[
E_{k-1} =
\begin{cases}
        \displaystyle \sum_{x\in Q_{k-1}} \alpha_k(x)^2w_{k-1}(x)\quad &\text{if} \, k\geq 2\\[10pt]
        (\lvert B_1\rvert/\lvert S_1\rvert )^2 \quad &\text{if} \, k=1.
 \end{cases}
\]
This weighted sum can be viewed as the expected value of $\alpha_k(x)^2$ and, for this reason, was denoted $\mathbb E_{k-1}[\alpha_{k}(x)^{2}]$ in \cite{ref1}.
We will only treat the weighted sum through the definition above.
We state our next result for $k \ge 1$, but note that the result and separate argument for $k = 1$ is not needed in the rest of the paper.

\begin{lemma}\label{sflemma2}
Let $\mathcal{A}$ be a collection of hyperplanes in $Q=S_1\times\ldots\times S_n$.
Let $k\geq 1$, and suppose $\delta_{k} \in (0,1/2]$ (i.e., $\delta_{k} \ne 0$).  
Then
\begin{align*}
w_k(B_k) \le \frac{1}{4\delta_k(1-\delta_k)} E_{k-1}.
\end{align*}
\end{lemma}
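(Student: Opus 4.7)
The plan is to evaluate $w_k(B_k)$ fiber-by-fiber over $x\in Q_{k-1}$ and then compare the resulting sum termwise to $E_{k-1}$ via an elementary quadratic inequality.

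First I would fix $x\in Q_{k-1}$ and compute $\sum_{y\in S_k,\,(x,y)\in B_k} w_k(x,y)$ directly from the definition of $w_k$. On fibers with $\alpha_k(x)\le\delta_k$ this contribution is $0$ by construction. On fibers with $\alpha_k(x)>\delta_k$ there are exactly $\alpha_k(x)\lvert S_k\rvert$ values of $y$ with $(x,y)\in B_k$, each carrying weight $\frac{\alpha_k(x)-\delta_k}{\alpha_k(x)(1-\delta_k)}\cdot\frac{w_{k-1}(x)}{\lvert S_k\rvert}$; the factors $\alpha_k(x)$ and $\lvert S_k\rvert$ cancel cleanly, leaving the exact identity
\[
w_k(B_k) = \sum_{\substack{x\in Q_{k-1}\\ \alpha_k(x)>\delta_k}} \frac{\alpha_k(x)-\delta_k}{1-\delta_k}\,w_{k-1}(x).
\]

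Next I would apply the pointwise inequality
\[
\frac{\alpha-\delta_k}{1-\delta_k} \le \frac{\alpha^2}{4\delta_k(1-\delta_k)} \qquad \text{for all real } \alpha,
\]
which, since $\delta_k\in(0,1/2]$ guarantees the denominator is positive, is equivalent after clearing fractions to $(\alpha-2\delta_k)^2\ge 0$. Plugging in $\alpha=\alpha_k(x)$ bounds each summand above, and the terms with $\alpha_k(x)\le\delta_k$ can be freely reinserted on the right-hand side since $\alpha_k(x)^2 w_{k-1}(x)\ge 0$. The right-hand side then collapses to $\frac{1}{4\delta_k(1-\delta_k)}E_{k-1}$, proving the lemma for $k\ge 2$. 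The $k=1$ case goes through the same way: the sum over $Q_0$ reduces to a single term in which $w_0$ is replaced by $1$ and $\alpha_1$ is replaced by $\lvert B_1\rvert/\lvert S_1\rvert$, and the quadratic inequality is applied once more.

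The only real content is spotting the perfect-square rearrangement $(\alpha-2\delta_k)^2\ge 0$ that produces the sharp constant $\frac{1}{4\delta_k(1-\delta_k)}$; beyond that the argument is just bookkeeping from the definition of $w_k$, and I do not anticipate a serious obstacle.
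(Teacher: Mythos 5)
Your proposal is correct and follows essentially the same route as the paper: both evaluate $w_k(B_k)$ fiber-by-fiber (the paper phrases the fiber contribution as an upper bound via $\max\{0,\cdot\}$ where you note it is an exact identity, an immaterial difference), and both then apply the same perfect-square inequality $(\alpha_k(x)-2\delta_k)^2\ge 0$ to compare with $E_{k-1}$, handling $k=1$ by the same substitution. No gaps.
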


\begin{proof}
First, consider $w_k(B_k)$ with $k\geq 2$.
We have 
\begin{align*}
w_k(B_k)& =\sum_{x\in Q_{k-1}} \sum\limits_{\substack{y\in S_k \\ (x,y)\in B_k}} w_k(x,y)\\
&\le\sum_{x\in Q_{k-1}} \lvert F_x\cap B_k \rvert \cdot \max\bigg\{0,\dfrac{\alpha_k(x)-\delta_k}{\alpha_k(x)(1-\delta_k)}\bigg\}\cdot \dfrac{w_{k-1}(x)}{\lvert S_{k} \rvert }.
\end{align*}
Since $\alpha_k(x)=\lvert F_x\cap B_k \rvert /\lvert S_{k} \rvert $, we obtain
\begin{align*}
w_k(B_k)\le\frac{1}{1-\delta_k} \sum_{x\in Q_{k-1}}\max\{0,\alpha_k(x)-\delta_k\}\cdot w_{k-1}(x).
\end{align*}
An important observation from \cite{ref1} is that 
\[
4\delta_k^2-4\delta_k\alpha_k(x)+\alpha_k(x)^2=(2\delta_k-\alpha_k(x))^2\geq 0,
\] 
so $\alpha_{k}(x)^2/4\delta_k\geq \alpha_k(x)-\delta_k$.
Thus,
\begin{align*}
w_k(B_k)& \le \frac{1}{1-\delta_k} \sum_{x\in Q_{k-1}}\frac{\alpha_k(x)^2}{4\delta_k}\cdot w_{k-1}(x)\\
&=\frac{1}{4\delta_k(1-\delta_k)} \sum_{x\in Q_{k-1}}\alpha_k(x)^2\cdot w_{k-1}(x)\\
&=\frac{1}{4\delta_k(1-\delta_k)} E_{k-1}.
\end{align*}
In the case that $k=1$, we have 
\begin{align*}
w_1(B_1)& =\sum\limits_{y\in B_1} w_1(y)\le \lvert B_1 \rvert \cdot \max\bigg\{0,\dfrac{(\lvert B_1 \rvert /\lvert S_{1} \rvert )-\delta_1}{(\lvert B_1 \rvert /\lvert S_{1} \rvert )(1-\delta_1)}\bigg\}\cdot \dfrac{1}{\lvert S_{1} \rvert }.
\end{align*}
Following the arguments above with $\alpha_k(x)$ replaced by $\lvert B_1 \rvert /\lvert S_{1} \rvert$, we obtain
\begin{align*}
    w_1(B_1)\le \frac{1}{4\delta_1(1-\delta_1)} \left(\frac{\lvert B_1 \rvert }{\lvert S_{1} \rvert }\right)^2=\frac{1}{4\delta_1(1-\delta_1)} E_{0}.
\end{align*}
The lemma follows.
\end{proof}


\begin{lemma}\label{sflemma3}
Let $\mathcal{A}$ be a collection of hyperplanes, pairwise non-parallel, in $Q$ satisfying \eqref{normbound} for some constant $C_0\geq 0$.
Then, for each integer $k\in[1, n]$, we have 
\begin{align*}
E_{k-1} \le \frac{1}{\lvert S_{k} \rvert ^2} \sum_{\substack{F_1,F_2\subseteq \{1,\ldots,k-1\} \\ \Vert F_1 \Vert>C_0/\lvert S_{k} \rvert ,\  \Vert F_2 \Vert>C_0/\lvert S_{k} \rvert }} \ \prod_{j\in F_1\cup F_2} \frac{1}{(1-\delta_j)\lvert S_{j} \rvert }.
\end{align*}
\end{lemma}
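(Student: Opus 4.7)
The plan is to bound $E_{k-1}$ by applying a union bound to the numerator of $\alpha_k(x)$ \emph{before} squaring, so that $\alpha_k(x)^2$ expands as a double sum over pairs of hyperplanes in $\mathcal{A}_k$.  Since $B_k=\bigcup_{A\in\mathcal{A}_k}A$, and since $k\in F(A)$ for every $A\in\mathcal{A}_k$, each intersection $F_x\cap A$ has cardinality either $0$ or $1$, with the value $1$ occurring precisely when $x$ lies in $A'=A^{\{1,\ldots,k-1\}}$ (viewed as a subset of $Q_{k-1}$).  Using $\lvert F_x\cap B_k\rvert\le\sum_{A\in\mathcal{A}_k}\lvert F_x\cap A\rvert$ and squaring yields
\[
\alpha_k(x)^2 \;\le\; \frac{1}{\lvert S_k\rvert^{2}}\sum_{A_1,A_2\in\mathcal{A}_k} \lvert F_x\cap A_1\rvert\cdot\lvert F_x\cap A_2\rvert,
\]
and each summand equals $1$ exactly when $x\in A_1'\cap A_2'$ and vanishes otherwise.

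Multiplying by $w_{k-1}(x)$, summing over $x\in Q_{k-1}$, and swapping the order of summation gives
\[
E_{k-1} \;\le\; \frac{1}{\lvert S_k\rvert^{2}}\sum_{A_1,A_2\in\mathcal{A}_k} w_{k-1}(A_1'\cap A_2').
\]
Each $A_1'\cap A_2'$ is either empty (when $A_1'$ and $A_2'$ disagree on some commonly fixed coordinate, in which case $w_{k-1}(A_1'\cap A_2')=0$) or a hyperplane with fixed-coordinate set $(F(A_1)\cup F(A_2))\setminus\{k\}\subseteq\{1,\ldots,k-1\}$.  In either case I would argue
\[
w_{k-1}(A_1'\cap A_2')\;\le\;\nu\bigl((F(A_1)\cup F(A_2))\setminus\{k\}\bigr),
\]
invoking a mild extension of the inequality \eqref{couldbealemma} inside Lemma~\ref{sflemma1}: for \emph{every} hyperplane $B$ with $F(B)\subseteq\{1,\ldots,m\}$, one has $w_m(B)\le\nu(F(B))$.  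This follows from \eqref{couldbealemma} applied at the stage $j=\max F(B)$ together with the weight-preservation identity $\sum_{y\in S_i}w_i(x,y)=w_{i-1}(x)$ iterated for $i=j+1,\ldots,m$.

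To finish, pairwise non-parallelism of $\mathcal{A}$ forces distinct $A\in\mathcal{A}_k$ to have distinct $F(A)$, hence distinct $F_i:=F(A_i)\setminus\{k\}\subseteq\{1,\ldots,k-1\}$.  Each such $F_i$ satisfies $\Vert F_i\Vert\cdot\lvert S_k\rvert = \Vert F(A_i)\Vert > C_0$ by \eqref{normbound}, i.e.\ $\Vert F_i\Vert > C_0/\lvert S_k\rvert$.  Since every summand is nonnegative, I may enlarge the indexing set to the full collection of ordered pairs $(F_1,F_2)$ with $F_1,F_2\subseteq\{1,\ldots,k-1\}$ and $\Vert F_i\Vert > C_0/\lvert S_k\rvert$, producing the stated bound; the case $k=1$ works the same way, with $F_1=F_2=\emptyset$ and $\Vert\emptyset\Vert=1>C_0/\lvert S_1\rvert$ because $\mathcal{A}_1$ being nonempty forces $\lvert S_1\rvert>C_0$ via \eqref{normbound}.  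The main obstacle I would expect is precisely the extension of \eqref{couldbealemma} noted above: since $A_1'\cap A_2'$ may have $\max F < k-1$, the lemma as stated does not apply directly, and the weight-preservation identity is exactly what is needed to close this gap.
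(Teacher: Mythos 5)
Your proposal is correct and follows essentially the same route as the paper: union-bound $\lvert F_x\cap B_k\rvert$ over $A\in\mathcal{A}_k$, square to get a double sum over pairs $(A_1,A_2)$, identify the weight $w_{k-1}(A_1'\cap A_2')$, bound it by $\nu(F_0(A_1)\cup F_0(A_2))$, and then use pairwise non-parallelism plus \eqref{normbound} to relax to the full sum over ordered pairs $(F_1,F_2)$. You also correctly flag that \eqref{couldbealemma} as stated applies only at level $\max F(A_1'\cap A_2')$, which can be strictly less than $k-1$, and that the weight-preservation identity is what transports the bound up to $w_{k-1}$; the paper relies on this same fact implicitly (both here and in the inductive step of Lemma~\ref{sflemma1}) through its identification of $w_m(X)$ with $w_n(X\times S_{m+1}\times\cdots\times S_n)$, so your remark is a fair bit of bookkeeping rather than a departure.
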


\begin{proof}
Similar to the proof of Corollary~\ref{corsflemma1}, for
$A$ in $\mathcal A_{k}$, we write
\[
F(A) = F_0(A) \cup \{ k \}
\]
for some $F_0(A)$ in $\{1,\ldots, k-1\}$.  Then
\[
\Vert F(A) \Vert = \Vert F_0(A) \Vert \cdot \lvert S_{k} \rvert.
\]
Observe that the condition $\Vert F(A) \Vert > C_0$ in \eqref{normbound} is equivalent to $\Vert F_0(A) \Vert > C_0/\lvert S_{k} \rvert $.

From the definition of $\alpha_k(x)$, we obtain
\begin{align*}
\alpha_k(x)=\frac{1}{\lvert S_{k} \rvert }\sum_{\substack{y\in S_k\\(x,y)\in B_k}} 1\le \frac{1}{\lvert S_{k} \rvert }\sum_{y\in S_k} \ \sum_{\substack{A\in\mathcal{A}_k \\(x,y)\in A}} 1=\frac{1}{\lvert S_{k} \rvert }\sum_{A\in\mathcal{A}_k} \ \sum_{\substack{y\in S_k \\(x,y)\in A}} 1.
\end{align*}
Recall the notation $A'=A^{\{1,\ldots,k-1\}}$.  
Since for each $x\in Q_{k-1}$ and $A\in\mathcal{A}_k$, there exists a unique $y\in S_k$ with $(x,y)\in A$ if and only if $x\in A'$, then we have
\begin{align*}
\alpha_k(x)\le \frac{1}{\lvert S_{k} \rvert }\sum_{\substack{A\in\mathcal{A}_k\\x\in A'}} 1.
\end{align*}
We then deduce
\begin{align*}
\alpha_k(x)^2\le \frac{1}{\lvert S_{k} \rvert ^2}\sum_{\substack{A_1,A_2\in\mathcal{A}_k\\x\in A_1'\cap A_2'}} 1,
\end{align*}
so that
\begin{align*}
\sum_{x\in Q_{k-1}}w_{k-1}(x)\alpha_k(x)^2\le \frac{1}{\lvert S_{k} \rvert ^2}\sum_{A_1,A_2\in\mathcal{A}_k} \ \sum_{\substack{x\in Q_{k-1}\\x\in A_1'\cap A_2'}}w_{k-1}(x).
\end{align*}
Thus, we deduce
\begin{align*}
E_{k-1} \le \frac{1}{\lvert S_{k} \rvert ^2}\sum_{A_1,A_2\in\mathcal{A}_k}w_{k-1}(A_1'\cap A_2').
\end{align*}

If the intersection of $A_1'$ and $A_2'$ is empty, then $w_{k-1}(A_1'\cap A_2')=0$.
If the intersection of $A_1'$ and $A_2'$ is non-empty, then the intersection is a hyperplane with 
\[
(F(A_1)\setminus \{k\})\cup (F(A_2)\setminus \{k\}) = F_{0}(A_{1}) \cup F_{0}(A_{2})
\]
as its set of fixed coordinates.
Let $F_1=F_{0}(A_{1})$ and $F_2=F_{0}(A_{2})$.
Recall that $\Vert F_i \Vert > C_0/\lvert S_{k} \rvert $ for $i \in \{1,2\}$.
Note that $F_1$ and $F_2$ uniquely determine $A_1$ and $A_2$ in $\mathcal A_k$,  respectively, since no two hyperplanes in $\mathcal{A}$ are parallel.
From \eqref{couldbealemma}, we obtain
\begin{align*}
E_{k-1} &\le \frac{1}{\lvert S_{k} \rvert ^2}\sum_{A_1,A_2\in\mathcal{A}_k}\nu(F(A_1'\cap A_2') )\\
&= \frac{1}{\lvert S_{k} \rvert ^2}\sum_{A_1,A_2\in\mathcal{A}_k}\nu(F_{0}(A_{1})\cup F_{0}(A_{2})) \\
&\le \frac{1}{\lvert S_{k} \rvert ^2}\sum_{\substack{F_1,F_2\subseteq \{1,\ldots,k-1\} \\ \Vert F_1 \Vert>C_0/\lvert S_{k} \rvert ,\  \Vert F_2 \Vert>C_0/\lvert S_{k} \rvert }} \nu(F_1\cup F_2) \\
&= \frac{1}{\lvert S_{k} \rvert ^2} \sum_{\substack{F_1,F_2\subseteq \{1,\ldots,k-1\} \\ \Vert F_1 \Vert>C_0/\lvert S_{k} \rvert ,\  \Vert F_2 \Vert>C_0/\lvert S_{k} \rvert }} \ \prod_{j\in F_1\cup F_2} \frac{1}{(1-\delta_j)\lvert S_{j} \rvert },
\end{align*}
finishing the proof.
\end{proof}

As a consequence of Lemma~\ref{sflemma2} and Lemma~\ref{sflemma3}, we immediately obtain the following.

\begin{corollary}\label{corsflemma3}
Fix a constant $C_0\geq 0$.
Let $\mathcal{A}$ be a collection of hyperplanes, pairwise non-parallel, in $Q=S_1\times\ldots\times S_n$ satisfying \eqref{normbound}.
Then, for each integer $k \in \{ 1, 2, \ldots, n \}$, we have 
\begin{align*}
w_{k}(B_k) \le \frac{1}{4\delta_k(1-\delta_k) \lvert S_{k} \rvert ^2} \sum_{\substack{F_1,F_2\subseteq \{1,\ldots,k-1\} \\ \Vert F_1 \Vert>C_0/\lvert S_{k} \rvert ,\  \Vert F_2 \Vert>C_0/\lvert S_{k} \rvert }} \ \prod_{j\in F_1\cup F_2} \frac{1}{(1-\delta_j)\lvert S_{j} \rvert }.
\end{align*}
\end{corollary}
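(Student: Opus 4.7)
The plan is to obtain this corollary as an immediate composition of the two preceding lemmas, in precisely the order they were stated. Since both lemmas are already tailored to feed into one another, the "proof" is really just a verification that the hypotheses match and that the two bounds multiply to give the claimed expression.

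First I would invoke Lemma~\ref{sflemma2}. Under the (implicit) condition that $\delta_k \in (0, 1/2]$, which is forced by the factor $1/(4\delta_k(1-\delta_k))$ appearing in the corollary's statement, Lemma~\ref{sflemma2} gives
\[
w_k(B_k) \;\le\; \frac{1}{4\delta_k(1-\delta_k)}\, E_{k-1}.
\]
This step uses nothing beyond the defining properties of the weights $w_k$, so it holds for any collection $\mathcal{A}$ of hyperplanes in $Q$, in particular for the $\mathcal{A}$ in the corollary.

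Next I would apply Lemma~\ref{sflemma3}, whose hypotheses --- pairwise non-parallel hyperplanes satisfying \eqref{normbound} --- are exactly the standing hypotheses of the corollary. This supplies the bound
\[
E_{k-1} \;\le\; \frac{1}{\lvert S_k \rvert^2} \sum_{\substack{F_1,F_2\subseteq \{1,\ldots,k-1\} \\ \Vert F_1\Vert>C_0/\lvert S_k\rvert,\ \Vert F_2\Vert>C_0/\lvert S_k\rvert}} \prod_{j\in F_1\cup F_2} \frac{1}{(1-\delta_j)\lvert S_j\rvert}.
\]

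Multiplying these two inequalities term by term produces precisely the right-hand side of the corollary. Because the bounds are designed to compose by multiplication, no algebraic rearrangement is needed and no auxiliary argument arises. Consequently there is no real obstacle to surmount here; the only thing to check is that the hypothesis sets of Lemma~\ref{sflemma2} and Lemma~\ref{sflemma3} are jointly implied by those of the corollary, which they are. This is why the corollary can be asserted "immediately".
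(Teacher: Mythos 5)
Your proposal is correct and matches the paper exactly: the corollary is stated there as an immediate consequence of Lemma~\ref{sflemma2} and Lemma~\ref{sflemma3}, chained in precisely the order you give. Your remark that $\delta_k\ne 0$ is implicitly required for the bound to make sense is a fair observation, consistent with the hypothesis of Lemma~\ref{sflemma2}.
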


We also indicate a different way to express the same bound on $w_{k}(B_k)$ which leads however to easier computations.  

\begin{corollary}\label{cor3pt5improved}
Fix a constant $C_{0} \ge 0$. 
Let $\mathcal A$ be a collection of hyperplanes, pairwise non-parallel, in $Q = S_{1} \times \cdots \times S_{n}$ such that
for every hyperplane $A \in \mathcal A$ we have $\Vert F(A) \Vert > C_{0}$.  Fix $k \in \{ 1, 2, \ldots, n \}$.  
Let $r$ be the minimal positive integer such that $\lvert S_{t} \rvert  > C_0/\lvert S_{k} \rvert $ for all $t \ge r$,
and suppose $r \le k-1$.  Define
\[
U = \sum_{\substack{F_{1} \subseteq \{ 1, \ldots, r-1 \} \\ \Vert F_{1} \Vert \le C_{0}/\lvert S_{k} \rvert }} 
\sum_{\substack{F_{2} \subseteq \{ 1, \ldots, r-1 \} \\ \Vert F_{2} \Vert \le C_{0}/\lvert S_{k} \rvert }} 
\prod_{j \in F_{1} \cup F_{2}} \dfrac{1}{(1-\delta_{j}) \lvert S_{j} \rvert } 
\]
and
\[
V = \sum_{\substack{F_{1} \subseteq \{ 1, \ldots, r-1 \} \\ \Vert F_{1} \Vert \le C_{0}/\lvert S_{k} \rvert }} 
\sum_{\substack{F_{2} \subseteq \{ 1, \ldots, r-1 \} \\ \Vert F_{2} \Vert > C_{0}/\lvert S_{k} \rvert }} 
\prod_{j \in F_{1} \cup F_{2}} \dfrac{1}{(1-\delta_{j}) \lvert S_{j} \rvert }.
\]
Then 
\begin{align*}
w_{k}(B_k) 
&\le  \dfrac{1}{4 \delta_{k} (1-\delta_{k}) \lvert S_{k} \rvert ^{2}} \Bigg( \prod_{j=1}^{k-1} \bigg(  1 + \dfrac{3}{(1-\delta_{j}) \lvert S_{j} \rvert }  \bigg) \\[5pt]
&\qquad \qquad -  2 \,(U+V) \prod_{j=r}^{k-1} \bigg(  1 + \dfrac{1}{(1-\delta_{j}) \lvert S_{j} \rvert }  \bigg) + U \Bigg).
\end{align*}
\end{corollary}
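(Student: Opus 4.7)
The plan is to start from the bound in Corollary~\ref{corsflemma3} and show that the double sum appearing there equals exactly the quantity in parentheses on the right-hand side of the desired inequality. Writing $s_{j} = 1/((1-\delta_{j}) \lvert S_{j} \rvert)$ for brevity, let
\[
W = \sum_{\substack{F_{1}, F_{2} \subseteq \{1, \ldots, k-1\} \\ \Vert F_{1} \Vert > C_{0}/\lvert S_{k} \rvert, \ \Vert F_{2} \Vert > C_{0}/\lvert S_{k} \rvert}} \prod_{j \in F_{1} \cup F_{2}} s_{j},
\]
so that Corollary~\ref{corsflemma3} supplies $w_{k}(B_{k}) \le W/(4 \delta_{k} (1-\delta_{k}) \lvert S_{k} \rvert^{2})$. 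The goal reduces to showing
\[
W = \prod_{j=1}^{k-1}(1 + 3 s_{j}) - 2(U+V) \prod_{j=r}^{k-1}(1 + s_{j}) + U.
\]

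First I would evaluate the unconstrained version of the double sum. For each $j \in \{1, \ldots, k-1\}$ the pair $(F_{1}, F_{2})$ places $j$ either in neither set (contributing the factor $1$) or in at least one of the two (contributing the factor $s_{j}$, since $j$ appears once in the union product), and three of the four membership configurations fall in the latter case. Multiplying over $j$ yields $\prod_{j=1}^{k-1}(1 + 3 s_{j})$ for the unconstrained sum. Then, by inclusion--exclusion and the symmetry in $F_{1}$, $F_{2}$,
\[
W = \prod_{j=1}^{k-1}(1 + 3 s_{j}) - 2 T + U,
\]
where $T$ sums over pairs with $\Vert F_{1} \Vert \le C_{0}/\lvert S_{k} \rvert$ and $F_{2} \subseteq \{1, \ldots, k-1\}$ otherwise unrestricted, and the doubly constrained sum is exactly the $U$ of the statement.

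The remaining task is to evaluate $T$, which is where the hypothesis on $r$ enters: whenever $\Vert F_{i} \Vert \le C_{0}/\lvert S_{k} \rvert$, the set $F_{i}$ cannot contain any index $j \ge r$ (otherwise $\Vert F_{i} \Vert \ge \lvert S_{j} \rvert > C_{0}/\lvert S_{k} \rvert$), so $F_{i} \subseteq \{1, \ldots, r-1\}$. In $T$ I would split $F_{2} = F_{2}' \cup F_{2}''$ with $F_{2}' \subseteq \{1, \ldots, r-1\}$ and $F_{2}'' \subseteq \{r, \ldots, k-1\}$. Since $F_{1} \subseteq \{1, \ldots, r-1\}$ is disjoint from $F_{2}''$, the product $\prod_{j \in F_{1} \cup F_{2}} s_{j}$ factors as $\prod_{j \in F_{1} \cup F_{2}'} s_{j} \cdot \prod_{j \in F_{2}''} s_{j}$. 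Summing the latter factor over $F_{2}'' \subseteq \{r, \ldots, k-1\}$ produces $\prod_{j=r}^{k-1}(1 + s_{j})$, and splitting the remaining inner double sum over $F_{1}$ and $F_{2}'$ according to whether $\Vert F_{2}' \Vert \le C_{0}/\lvert S_{k} \rvert$ or $> C_{0}/\lvert S_{k} \rvert$ yields $U + V$ exactly. Hence $T = (U+V) \prod_{j=r}^{k-1}(1 + s_{j})$, and substitution completes the proof. The main obstacle is simply keeping the inclusion--exclusion and the split of $F_{2}$ into its $\{1, \ldots, r-1\}$ and $\{r, \ldots, k-1\}$ parts bookkept cleanly; no new analytic content beyond Corollary~\ref{corsflemma3} is needed.
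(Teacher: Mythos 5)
Your proposal is correct and follows essentially the same route as the paper: start from Corollary~\ref{corsflemma3}, apply inclusion--exclusion on the two constraints $\Vert F_i\Vert > C_0/\lvert S_k\rvert$ to write $W = \prod_{j=1}^{k-1}(1+3s_j) - 2T + U$, observe that a small norm forces $F_i \subseteq \{1,\ldots,r-1\}$, and evaluate $T$ by splitting $F_2$ into its $\{1,\ldots,r-1\}$ and $\{r,\ldots,k-1\}$ parts to factor out $\prod_{j=r}^{k-1}(1+s_j)$. The only cosmetic difference is that you evaluate the unconstrained sum coordinate-by-coordinate (three of four configurations contribute $s_j$), whereas the paper first groups by $J = F_1\cup F_2$ and counts $3^{\lvert J\rvert}$; these are the same computation.
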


Before going to the proof, we clarify the interpretation of $U$ and $V$ when $r = 1$.  
With $r = 1$, the double sum in the definition of $U$ has either zero terms or exactly one term corresponding to $F_{1} = F_{2} = \emptyset$.
Since $\Vert \emptyset \Vert = 1$, the term exists precisely when $C_{0}/\lvert S_{k} \rvert \ge 1$.  
The empty product is $1$ so that in this case $U = 1$. 
On the other hand, if $C_{0}/\lvert S_{k} \rvert < 1$, then $\emptyset$ does not satisfy the conditions on $F_{1}$ and $F_{2}$ in the double sum;
hence, in this case, the double sum has no terms and is $0$.  
Since we cannot have both $\Vert \emptyset \Vert \le C_{0}/\lvert S_{k} \rvert$ and  $\Vert \emptyset \Vert > C_{0}/\lvert S_{k} \rvert$, 
we deduce $V = 0$ whenever $r = 1$. 

\begin{proof}[Proof of Corollary~\ref{cor3pt5improved}]
From Corollary \ref{corsflemma3}, we obtain
\begin{align*}
   4\delta_k(1-\delta_k) \lvert S_{k} &\rvert ^2 w_{k}(B_k) \le \sum_{\substack{F_1,F_2\subseteq \{1,\ldots,k-1\} \\ \Vert F_1 \Vert>C_0/\lvert S_{k} \rvert ,\  \Vert F_2 \Vert>C_0/\lvert S_{k} \rvert }} \ \prod_{j\in F_1\cup F_2} \frac{1}{(1-\delta_j)\lvert S_{j} \rvert } \\[5pt]
    &= \sum_{\substack{J\subseteq\{1,\ldots,k-1\}}} \  \sum\limits_{\substack{F_1,F_2\subseteq\{1,\ldots,k-1\} \\ F_1\cup F_2=J}}  \ \prod_{j\in J} \frac{1}{(1-\delta_j)\lvert S_{j} \rvert }\\[5pt]
    &\qquad\qquad - \sum_{\substack{F_{1} \subseteq \{ 1, \ldots, k-1 \} \\ \Vert F_{1} \Vert \le C_{0}/\lvert S_{k} \rvert }} \sum_{\substack{F_{2} \subseteq \{ 1, \ldots, k-1 \}}} \prod_{j \in F_{1} \cup F_{2}} \dfrac{1}{(1-\delta_{j}) \lvert S_{j} \rvert }\\[5pt]
    &\qquad\qquad - \sum_{\substack{F_{2} \subseteq \{ 1, \ldots, k-1 \} \\ \Vert F_{2} \Vert \le C_{0}/\lvert S_{k} \rvert }} \sum_{\substack{F_{1} \subseteq \{ 1, \ldots, k-1 \}}} \prod_{j \in F_{1} \cup F_{2}} \dfrac{1}{(1-\delta_{j}) \lvert S_{j} \rvert }\\[5pt]
    &\qquad\qquad + \sum_{\substack{F_{1} \subseteq \{ 1, \ldots, k-1 \} \\ \Vert F_{1} \Vert \le C_{0}/\lvert S_{k} \rvert }} \sum_{\substack{F_{2} \subseteq \{ 1, \ldots, k-1 \} \\ \Vert F_{2} \Vert \le C_{0}/\lvert S_{k} \rvert }} \prod_{j \in F_{1} \cup F_{2}} \dfrac{1}{(1-\delta_{j}) \lvert S_{j} \rvert }.
\end{align*}
    For $i \in \{1,2\}$, if $\Vert F_i \Vert \le C_0/\lvert S_{k} \rvert $, then $F_i \subseteq \{1, 2, \ldots, r-1\}$, which we obtain from the definition of $\Vert F_i \Vert$ and $r$.
Hence, we deduce
\begin{align*}
    4\delta_k(1-\delta_k) \lvert S_{k} \rvert ^2 w_{k}(B_k) &\le \sum_{\substack{J\subseteq\{1,\ldots,k-1\}}} \  \sum\limits_{\substack{F_1,F_2\subseteq\{1,\ldots,k-1\} \\ F_1\cup F_2=J}}  \ \prod_{j\in J} \frac{1}{(1-\delta_j)\lvert S_{j} \rvert }\\[5pt]
    &\qquad - \sum_{\substack{F_{1} \subseteq \{ 1, \ldots, r-1 \} \\ \Vert F_{1} \Vert \le C_{0}/\lvert S_{k} \rvert }} \sum_{\substack{F_{2} \subseteq \{ 1, \ldots, k-1 \}}} \prod_{j \in F_{1} \cup F_{2}} \dfrac{1}{(1-\delta_{j}) \lvert S_{j} \rvert }\\[5pt]
    &\qquad - \sum_{\substack{F_{2} \subseteq \{ 1, \ldots, r-1 \} \\ \Vert F_{2} \Vert \le C_{0}/\lvert S_{k} \rvert }} \sum_{\substack{F_{1} \subseteq \{ 1, \ldots, k-1 \}}} \prod_{j \in F_{1} \cup F_{2}} \dfrac{1}{(1-\delta_{j}) \lvert S_{j} \rvert }\\[5pt]
    &\qquad + \sum_{\substack{F_{1} \subseteq \{ 1, \ldots, r-1 \} \\ \Vert F_{1} \Vert \le C_{0}/\lvert S_{k} \rvert }} \sum_{\substack{F_{2} \subseteq \{ 1, \ldots, r-1 \} \\ \Vert F_{2} \Vert \le C_{0}/\lvert S_{k} \rvert }} \prod_{j \in F_{1} \cup F_{2}} \dfrac{1}{(1-\delta_{j}) \lvert S_{j} \rvert }.
\end{align*}
The last double sum of a product above is equal to $U$. 
Considering the second double sum of a product on the right-hand side of the above inequality, we can express $F_2$ as $A\cup B$ where $A\subseteq \{1,\ldots,r-1\}$ and $B\subseteq \{r,\ldots,k-1\}$, so we obtain
\begin{align*}
   &\sum_{\substack{F_{1} \subseteq \{ 1, \ldots, r-1 \} \\ \Vert F_{1} \Vert \le C_{0}/\lvert S_{k} \rvert }} \, \sum_{\substack{F_{2} \subseteq \{ 1, \ldots, k-1 \}}} \, \prod_{j \in F_{1} \cup F_{2}} \dfrac{1}{(1-\delta_{j}) \lvert S_{j} \rvert }\\[5pt]
    &\quad =\sum_{\substack{F_{1} \subseteq \{ 1, \ldots, r-1 \} \\ \Vert F_{1} \Vert \le C_{0}/\lvert S_{k} \rvert }} \sum_{\substack{A \subseteq \{ 1, \ldots, r-1 \}}}\sum_{\substack{B \subseteq \{ r, \ldots, k-1 \}}} \prod_{j \in F_{1} \cup (A\cup B)} \dfrac{1}{(1-\delta_{j}) \lvert S_{j} \rvert }\\[5pt]
    &\quad =\sum_{\substack{F_{1} \subseteq \{ 1, \ldots, r-1 \} \\ \Vert F_{1} \Vert \le C_{0}/\lvert S_{k} \rvert }} \sum_{\substack{A \subseteq \{ 1, \ldots, r-1 \}}}\sum_{\substack{B \subseteq \{ r, \ldots, k-1 \}}} \prod_{j \in F_{1} \cup A} \dfrac{1}{(1-\delta_{j}) \lvert S_{j} \rvert } \prod_{j \in B} \dfrac{1}{(1-\delta_{j}) \lvert S_{j} \rvert }\\[5pt]
    &\quad =\sum_{\substack{F_{1} \subseteq \{ 1, \ldots, r-1 \} \\ \Vert F_{1} \Vert \le C_{0}/\lvert S_{k} \rvert }} \sum_{\substack{A \subseteq \{ 1, \ldots, r-1 \}}}\prod_{j \in F_{1} \cup A} \dfrac{1}{(1-\delta_{j}) \lvert S_{j} \rvert }\sum_{\substack{B \subseteq \{ r, \ldots, k-1 \}}} \prod_{j \in B} \dfrac{1}{(1-\delta_{j}) \lvert S_{j} \rvert },
\end{align*}
where the second to last equality holds since $(F_1\cup A)\cap B=\emptyset$.
Observe that 
\begin{align*}
\sum_{\substack{B \subseteq \{ r, \ldots, k-1 \}}} \prod_{j \in B} \dfrac{1}{(1-\delta_{j}) \lvert S_{j} \rvert }=\prod_{j=r}^{k-1}\left(1+\frac{1}{(1-\delta_j)\lvert S_{j} \rvert }\right).
\end{align*}
We also have
\begin{align*}
\sum_{\substack{F_{1} \subseteq \{ 1, \ldots, r-1 \} \\ \Vert F_{1} \Vert \le C_{0}/\lvert S_{k} \rvert }} &\, \sum_{\substack{A \subseteq \{ 1, \ldots, r-1 \}}} \, \prod_{j \in F_{1} \cup A} \dfrac{1}{(1-\delta_{j}) \lvert S_{j} \rvert }\\[5pt]
&=\sum_{\substack{F_{1} \subseteq \{ 1, \ldots, r-1 \} \\ \Vert F_{1} \Vert \le C_{0}/\lvert S_{k} \rvert }} \sum_{\substack{A \subseteq \{ 1, \ldots, r-1 \}\\ \Vert A \Vert > C_0/\lvert S_{k} \rvert }}\prod_{j \in F_{1} \cup A} \dfrac{1}{(1-\delta_{j}) \lvert S_{j} \rvert }\\[5pt]
&\quad \qquad+\sum_{\substack{F_{1} \subseteq \{ 1, \ldots, r-1 \} \\ \Vert F_{1} \Vert \le C_{0}/\lvert S_{k} \rvert }} \sum_{\substack{A \subseteq \{ 1, \ldots, r-1 \}\\ \Vert A \Vert \le C_0/\lvert S_{k} \rvert }}\prod_{j \in F_{1} \cup A} \dfrac{1}{(1-\delta_{j}) \lvert S_{j} \rvert }\\[5pt]
&=U+V.
\end{align*}
Thus, we deduce
\begin{align*}
    \sum_{\substack{F_{1} \subseteq \{ 1, \ldots, r-1 \} \\ \Vert F_{1} \Vert \le C_{0}/\lvert S_{k} \rvert }} \, \sum_{\substack{F_{2} \subseteq \{ 1, \ldots, k-1 \}}} \, &\prod_{j \in F_{1} \cup F_{2}} \dfrac{1}{(1-\delta_{j}) \lvert S_{j} \rvert } \\
    &\quad =(U+V)\prod_{j=r}^{k-1}\left(1+\frac{1}{(1-\delta_j)\lvert S_{j} \rvert }\right).
\end{align*}
Observe that this last equation is equivalent to
\begin{align*}
    \sum_{\substack{F_{2} \subseteq \{ 1, \ldots, r-1 \} \\ \Vert F_{2} \Vert \le C_{0}/\lvert S_{k} \rvert }} \sum_{\substack{F_{1} \subseteq \{ 1, \ldots, k-1 \}}} &\prod_{j \in F_{1} \cup F_{2}} \dfrac{1}{(1-\delta_{j}) \lvert S_{j} \rvert } \\
    &\quad =(U+V)\prod_{j=r}^{k-1}\left(1+\frac{1}{(1-\delta_j)\lvert S_{j} \rvert }\right).
\end{align*}
Rearranging the order of our second sum and product below, we have
\begin{align*}
\sum_{J\subseteq\{1,\ldots,k-1\}} \ & \sum\limits_{\substack{F_1,F_2\subseteq\{1,\ldots,k-1\} \\ F_1\cup F_2=J}} \prod_{j\in J}  \frac{1}{(1-\delta_{j})\lvert S_{j} \rvert } \\
&\qquad =\sum_{J\subseteq\{1,\ldots,k-1\}} \ \prod_{j\in J} \frac{1}{(1-\delta_{j})\lvert S_{j} \rvert } \sum\limits_{\substack{F_1,F_2\subseteq\{1,\ldots,k-1\} \\ F_1\cup F_2=J}} 1.
\end{align*}
Since each element of $J$ with $F_1\cup F_2=J$ is either in $F_1$ and not $F_2$, in $F_2$ and not $F_1$, or in both $F_1$ and $F_2$, we deduce 
\begin{align*}
\sum\limits_{\substack{F_1,F_2\subseteq\{1,\ldots,k-1\} \\ F_1\cup F_2=J}} 1=3^{\lvert J \rvert }.
\end{align*}
Substituting, we obtain
\begin{align*}
\sum_{\substack{J\subseteq\{1,\ldots,k-1\}}} &\ \sum\limits_{\substack{F_1,F_2\subseteq\{1,\ldots,k-1\} \\ F_1\cup F_2=J}}  \ \prod_{j\in J} \frac{1}{(1-\delta_j)\lvert S_{j} \rvert } \\
&\qquad = \sum_{J\subseteq\{1,\ldots,k-1\}} 3^{\lvert J \rvert } \ \prod_{j\in J}  \frac{1}{(1-\delta_{j})\lvert S_{j} \rvert } \\
&\qquad =\sum_{J\subseteq\{1,\ldots,k-1\}} \ \prod_{j\in J} \frac{3}{(1-\delta_{j})\lvert S_{j} \rvert } \\
&\qquad =\prod_{j=1}^{k-1} \bigg(  1 + \dfrac{3}{(1-\delta_{j}) \lvert S_{j} \rvert }  \bigg).
\end{align*}
Combining the above, we conclude that
\begin{align*}
   w_{k}(B_k) 
&\le  \dfrac{1}{4 \delta_{k} (1-\delta_{k}) \lvert S_{k} \rvert ^{2}} \Bigg( \prod_{j=1}^{k-1} \bigg(  1 + \dfrac{3}{(1-\delta_{j}) \lvert S_{j} \rvert }  \bigg) \\[5pt]
&\qquad \qquad -  
2 \,(U+V) \prod_{j=r}^{k-1} \bigg(  1 + \dfrac{1}{(1-\delta_{j}) \lvert S_{j} \rvert }  \bigg) 
+ U \Bigg),
\end{align*}
which completes the proof.
\end{proof}


The idea is to apply the prior upper bounds for $w_{k}(B_k)$ to estimate the value of $w_{k}(B_k)$ for $k \le N$, where in the end we will take $N = 10^{6}$.  
Next, we show how to find an upper bound for $w_{k}(B_k)$ for $k > N$ and then find an upper bound for
\[
\sum_{k > N} w_{k}(B_k).
\]
For this part we require $N \ge 61$ to be an integer and $k > N$.  
Note that we view $N$ as fixed, so we will allow constants below to depend on $N$.
We set $\delta_{j} = 1/2$ for all $j > N$.
As we will be using Corollary~\ref{cor3pt5improved} to compute $w_{k}(B_k)$ for $k = N$, 
we will have already completed most of the calculation for 
\begin{equation}\label{mzerodef}
M_{0} = \prod_{j=1}^{N} \bigg(  1 + \dfrac{3}{(1-\delta_{j}) \lvert S_{j} \rvert }  \bigg),
\end{equation}
so we make use of it.  Finally, we denote the $j^{\text{th}}$ prime by $p_{j}$ and the number of primes $\le x$ by $\pi(x)$.  

\begin{lemma}\label{tailendtheorem}
With the above notation, we set
\[
c_1 = -\log \log  p_{N} + \frac{1}{\log ^2 p_{N}} 
\quad \text{ and } \quad
c_{2} = 1 + \dfrac{3}{2\log p_{N}}.
\]
If $\lvert S_{j} \rvert  = p_{j}$ for every $j > N$, then
\begin{align*}
\sum_{k > N} w_{k}(B_k) &\le\dfrac{2 c_{2} M_{0} e^{6c_{1}} }{p_{N}}\cdot\Big( \log^{5} p_{N}+ 5 \log^{4} p_{N}  + 20 \log^{3} p_{N}  \\[5pt]
&\qquad \qquad + 60 \log^{2} p_{N} + 120 \log p_{N} + 120 \Big).
\end{align*}
\end{lemma}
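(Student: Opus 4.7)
The plan is to apply Corollary~\ref{corsflemma3} with the constraints on $\Vert F_{i} \Vert$ relaxed to obtain a clean product bound for $w_{k}(B_{k})$, and then to estimate the resulting tail sum using effective forms of Mertens' theorem and the Rosser--Schoenfeld bound on $\pi(x)$.

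For $k > N$, I would begin by dropping the constraints $\Vert F_{i} \Vert > C_{0}/\lvert S_{k} \rvert$ in Corollary~\ref{corsflemma3}; enlarging the range of summation yields an upper bound, and the resulting unconstrained double sum equals $\prod_{j=1}^{k-1}\bigl(1 + 3/((1-\delta_{j})\lvert S_{j} \rvert)\bigr)$ by the same computation carried out at the end of the proof of Corollary~\ref{cor3pt5improved}. Since $\delta_{k} = 1/2$ and $\lvert S_{k} \rvert = p_{k}$ for $k > N$, the prefactor $1/(4\delta_{k}(1-\delta_{k})\lvert S_{k} \rvert^{2})$ reduces to $1/p_{k}^{2}$. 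Splitting the product at index $N$ extracts the factor $M_{0}$ defined in \eqref{mzerodef}, leaving $\prod_{j=N+1}^{k-1}(1 + 6/p_{j})$, since $(1-\delta_{j})\lvert S_{j} \rvert = p_{j}/2$ for $j > N$. Next, I would apply $1+x \le e^{x}$ together with an effective Mertens estimate (for $N \ge 61$, so $p_{N}$ is large enough that the Rosser--Schoenfeld error terms in $\sum_{p \le x} 1/p$ are comfortably absorbed by $1/\log^{2} p_{N}$) to obtain
\[
\sum_{j=N+1}^{k-1} \frac{1}{p_{j}} \le \log\log p_{k-1} - \log\log p_{N} + \frac{1}{\log^{2} p_{N}},
\]
and hence $\prod_{j=N+1}^{k-1}(1 + 6/p_{j}) \le \log^{6} p_{k-1} \cdot e^{6c_{1}}$. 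Combining gives $w_{k}(B_{k}) \le M_{0} e^{6c_{1}} \log^{6} p_{k-1}/p_{k}^{2}$. Using $\log p_{k-1} \le \log p_{k}$ and reindexing over primes reduces the task to bounding $\sum_{p > p_{N}} \log^{6} p/p^{2}$.

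For this sum, I would apply Abel summation with $f(t) = \log^{6} t/t^{2}$: integration by parts against $\pi(t)$ (the boundary term $f(X)\pi(X)$ vanishing as $X \to \infty$, and $-f(p_{N})\pi(p_{N})$ being non-positive) yields
\[
\sum_{p > p_{N}} f(p) \le \int_{p_{N}}^{\infty} \pi(t)\bigl(-f'(t)\bigr)\, dt.
\]
Using $\pi(t) \le c_{2} t/\log t$ (valid for $t \ge p_{N}$ by the Rosser--Schoenfeld bound, since $3/(2\log t) \le 3/(2\log p_{N})$) together with $-f'(t) = (2\log^{6} t - 6\log^{5} t)/t^{3}$, the integrand becomes $c_{2}(2\log^{5} t - 6\log^{4} t)/t^{2}$; discarding the negative contribution leaves $2c_{2}\int_{p_{N}}^{\infty} \log^{5} t/t^{2}\, dt$, and the factor $2$ in the theorem arises precisely from the $-2\log^{6} t/t^{3}$ term in $f'$. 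Finally, repeated integration by parts, using the reduction $\int_{p_{N}}^{\infty} \log^{n} t/t^{2}\, dt = \log^{n} p_{N}/p_{N} + n\int_{p_{N}}^{\infty} \log^{n-1} t/t^{2}\, dt$ with base case $\int_{p_{N}}^{\infty} dt/t^{2} = 1/p_{N}$, evaluates the integral to $(\log^{5} p_{N} + 5\log^{4} p_{N} + 20\log^{3} p_{N} + 60\log^{2} p_{N} + 120\log p_{N} + 120)/p_{N}$, yielding the claimed bound.

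The main obstacle will be verifying that the hypothesis $N \ge 61$ is sharp enough for both the effective Mertens bound and the explicit Rosser--Schoenfeld inequality to hold in the precise forms stated above; this is a careful but routine bookkeeping of the constants in the classical explicit estimates for $\sum_{p \le x} 1/p$ and $\pi(x)$, with the threshold $p_{N} \ge p_{61} = 283$ chosen to make the error terms fit inside $1/\log^{2} p_{N}$ and $3/(2\log p_{N})$ respectively.
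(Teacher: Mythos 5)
Your proposal is correct and follows essentially the same route as the paper: apply the unconstrained ($C_0=0$) case of the quadratic bound with $\delta_k=1/2$ to get $w_k(B_k)\le M_0 e^{6c_1}\log^6 p_k/p_k^2$ via the Rosser--Schoenfeld form of Mertens' theorem, then sum the tail by partial summation against $\pi(t)$ with $\pi(t)\le c_2 t/\log t$ and evaluate $\int_{p_N}^\infty \log^5 t/t^2\,dt$ exactly. The only cosmetic difference is that you discard the negative $-6\log^5 t/t^3$ contribution after inserting the bound on $\pi(t)$ rather than before, which is equally valid.
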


\begin{proof}
From $\delta_{k} = 1/2$ and Corollary~\ref{cor3pt5improved} (with $C_{0} = 0$ so $r = 1$ and $U = V = 0$),
we see that, for $k > N$, we have
\begin{align}\label{tailendeq1}
\begin{aligned}
w_{k}(B_k) 
&\le  \dfrac{1}{4 \delta_{k} (1-\delta_{k}) \lvert S_{k} \rvert ^{2}}  \prod_{j=1}^{k-1} \bigg(  1 + \dfrac{3}{(1-\delta_{j}) \lvert S_{j} \rvert }  \bigg) \\
&=  \dfrac{1}{\lvert S_{k} \rvert ^{2}} \prod_{j=1}^{k-1} \bigg(  1 + \dfrac{3}{(1-\delta_{j}) \lvert S_{j} \rvert }  \bigg).
\end{aligned}
\end{align}
Since $k > N$ and $\delta_{j} = 1/2$ for all $j > N$, we obtain
\begin{equation}\label{tailendeq2}
\prod_{j=1}^{k-1} \bigg( 1 + \frac{3}{(1-\delta_j)\lvert S_{j} \rvert } \bigg) =M_{0} 
 \prod_{j=N+1}^{k-1} \bigg( 1 + \frac{6}{\lvert S_{j} \rvert } \bigg) \leq M_0 \exp \bigg( 6 \sum_{j=N+1}^{k-1} \frac{1}{\lvert S_{j} \rvert } \bigg),
\end{equation}
 where we have used that $1+x \le e^x$ for all real numbers $x$ (the function $e^x$ is convex up and $y=1+x$ is a tangent line to its graph at $x=0$).

We are now ready to make use of the specification that $\lvert S_{j} \rvert  = p_{j}$ for every $j > N$.  
From the work of J.~B.~Rosser and L.~Schoenfeld \cite[Theorem~5]{rossschon}, we have the estimates
\[
\log \log x + B - \frac{1}{2 \log ^2 x} \leq \sum_{p \leq x} \frac{1}{p} < \log \log x + B + \frac{1}{2 \log ^2 x},
\qquad \text{ for } x \geq 286,
\]
for some constant $B \approx 0.2614972128$.  
As $N+1 \ge 62$ and $p_{62} = 293 > 286$, we deduce that
\begin{align*}
\sum_{j=N+1}^{k-1} &\frac{1}{\lvert S_{j} \rvert } 
= \sum_{j=N+1}^{k-1} \frac{1}{p_{j}} 
= \sum_{p \le p_{k-1}} \frac{1}{p} - \sum_{p \le p_{N}} \frac{1}{p} \\[5pt]
&\quad < \bigg(  \log \log p_{k-1} + B + \frac{1}{2 \log^2 p_{k-1}}  \bigg) - \bigg(  \log \log p_{N} + B - \frac{1}{2 \log^2 p_{N}}  \bigg) \\[5pt]
&\quad = \log \log p_{k-1} - \log \log p_{N} + \frac{1}{2 \log^2 p_{k-1}}  + \frac{1}{2 \log^2 p_{N}}  \\[5pt]
&\quad \le \log \log p_{k-1} - \log \log p_{N} + \frac{1}{2 \log^2 p_{N}}  + \frac{1}{2 \log^2 p_{N}} 
= \log \log p_{k-1} +  c_1.
\end{align*}
From \eqref{tailendeq2}, we now see that
\[
\prod_{j=1}^{k-1} \bigg( 1 + \frac{3}{(1-\delta_j)\lvert S_{j} \rvert } \bigg)
\leq M_{0} \exp\big(  6  \log \log p_{k-1} +  6 c_1  \big)
= M_0 e^{6c_{1}} \log^{6} p_{k-1}.
\]
From \eqref{tailendeq1}, we obtain the estimate for $w_{k}(B_k)$ for $k > N$ that we will want, namely
\[
w_{k}(B_k) \le M_0 e^{6c_{1}} \dfrac{\log^{6} p_{k}}{p_{k}^{2}}.
\]

Next, we want an estimate of the sum over $k > N$ of this bound for $w_{k}(B_k)$. 
We make use of a Riemann-Stieltjes integral to obtain
\begin{align*}
\sum_{k = N+1}^{\infty}  \dfrac{\log^{6} p_{k}}{p_{k}^{2}}
&\le  \int_{p_{N}}^{\infty} \frac{\log^{6} t}{t^2} \,d\,\pi (t) \\[5pt]
&= \frac{\pi(t)\log^{6} t}{t^2}\bigg\vert_{p_{N}}^{\infty} - \int_{p_{N}}^{\infty} \pi (t) \,d \bigg( \frac{\log^{6} t}{t^2} \bigg) \\[5pt]
&\le 2 \int_{p_{N}}^{\infty} \dfrac{\pi(t) \log^{6} t}{t^{3}} \,dt,
\end{align*}
where we have used that
\[
d \bigg( \frac{\log^{6} t}{t^2} \bigg) = \bigg(  \dfrac{6 \log^{5} t}{t^{3}} - \dfrac{2 \log^{6} t}{t^{3}}  \bigg) \,dt
\]
and ignored negative quantities.
From J.~B.~Rosser and L.~Schoenfeld \cite[Theorem~1]{rossschon}, we have
\[
\pi(x) < \dfrac{x}{\log x} \bigg(  1 + \dfrac{3}{2\log x}  \bigg) \qquad \text{ for all $x > 1$}.
\]
Thus, for $t \ge p_{N}$, we obtain $\pi(t) \le c_{2} \,t/\log t$.
Thus,
\[
\sum_{k = N+1}^{\infty}  \dfrac{\log^{6} p_{k}}{p_{k}^{2}} \le 2 c_{2} \int_{p_{N}}^{\infty} \dfrac{\log^{5} t}{t^{2}} \,dt.
\]
The latter integral can be computed exactly to obtain
\begin{align*}
\sum_{k = N+1}^{\infty}  \dfrac{\log^{6} p_{k}}{p_{k}^{2}} 
&\le \dfrac{2 c_{2}}{p_{N}} \Big( \log^{5} p_{N} + 5 \log^{4} p_{N} + 20 \log^{3} p_{N}\\[5pt]
&\qquad + 60 \log^{2} p_{N} + 120 \log p_{N} + 120 \Big).
\end{align*}
Combining the above, the lemma follows.
\end{proof}

\subsection{Proof of Theorem~\ref{thm118}}

We will now prove Theorem \ref{thm118}, which states that every covering system with distinct squarefree moduli has a minimum modulus which is $\le 118$.

\begin{proof}
Let $\mathcal{A}$ be a collection of hyperplanes covering $Q=S_1\times\ldots\times S_n$ corresponding to a covering system with distinct squarefree moduli. 
Recall that $\lvert S_{k} \rvert = p_{k}$ and if $A$ is a hyperplane corresponding to some congruence with squarefree modulus $m$ in a covering system, then $\Vert F(A) \Vert = m$.
For the sake of contradiction, assume that $\Vert F(A) \Vert > C_0=118$  for all $A \in \mathcal A$. For our computations, we used Maple 2019.

We choose $\delta_j$ as below:
\begin{gather*}
\delta_{1} = \cdots = \delta_{7} = 0, \quad \delta_{8} = 0.171, \quad 
\delta_{9} = 0.190, \quad \delta_{10} = 0.199, \\[4pt]
\delta_{11} = 0.210, \quad \delta_{12} = 0.210, \quad \delta_{13} = 0.224, \quad \delta_{14} = 0.233,  \\[4pt]
\delta_{15} = 0.237, \quad \delta_{16} = 0.237, \quad \delta_{17} = 0.237, \quad \delta_{18} = 0.252,  \\[4pt]
\delta_{19} = 0.252, \quad \delta_{20} = 0.255, \quad \delta_{21} = 0.260, \quad \delta_{22} = 0.261,  \\[4pt]
\quad \delta_{23} = 0.263, \quad \delta_{24} = 0.264,\quad \delta_{25} = 0.262,  \quad \delta_{26} = 0.265,  \quad \delta_{27} = 0.269,\\[4pt]
\delta_{j} = 0.279 \ \ \text{(for $28 \le j \le 35$)}, \qquad
\delta_{j} = 0.289 \ \ \text{(for $36 \le j \le 45$)},  \\[4pt]
\delta_{j} = 0.297 \ \ \text{(for $46 \le j \le 60$)}, \qquad
\delta_{j} = 0.307 \ \ \text{(for $61 \le j \le 99$)},  \\[4pt]
\delta_{j} =0.331 \ \ \text{(for $100 \le j \le 1000$)}, \qquad
\delta_{j} = 0.372 \ \ \text{(for $1001 \le j \le 10000$)},  \\[4pt]
\delta_{j} = 0.418 \ \ \text{(for $10001 \le j \le 1000000$)}, \qquad \delta_{j}=0.5 \ \ \text{(for $j \ge 1000001$)}.
\end{gather*}

Using Corollary \ref{corsflemma1}, we compute that $w_1(B_1)=w_2(B_2)=w_3(B_3)=0$, $w_4(B_4) \leq 1/210$, $w_5(B_5) \leq 3/110$, $w_6(B_6) \leq 50/1001$, and $w_7(B_7) \leq 43/715$, so we have
\begin{align}\label{1-7}
    \sum_{k=1}^7 w_k(B_k) \leq \frac{194}{1365} = 0.142124542124\ldots.
\end{align}
Using Corollary \ref{cor3pt5improved} and taking into account the comments before its proof, we calculate
\begin{align}\label{8-106}
    \sum_{k=8}^{10^6} w_k(B_k) \leq 0.856857558639\ldots.
\end{align}
The computations for \eqref{8-106}, in particular for the upper bound on $w_{N}(B_{N})$ where $N = 10^{6}$, 
provide us with all but the last factor (where $j = N$) of the product for $M_{0}$ in \eqref{mzerodef}.  
Including that factor and applying Lemma \ref{tailendtheorem}, we obtain
\begin{align}\label{tail}
    \sum_{k>10^6} w_k(B_k) \leq 0.000402960685\ldots.
\end{align}
Combining (\ref{1-7}), (\ref{8-106}), and (\ref{tail}), we obtain
\begin{align}
    \sum_{k=1}^{\infty} w_k(B_k)\leq 0.999385061449\ldots < 1.
\end{align}
Thus, by Lemma \ref{newlemma1}, $\mathcal{A}$ does not cover $Q$, which is a contradiction.
Therefore, every covering system with distinct squarefree moduli has a minimum modulus which is $\le 118$.
\end{proof}

\section{Proof of Theorem~\ref{secondresult}}

We proceed by induction on $k$, with the case $k = 0$ being initially covered in \cite{ref9}.
Suppose that we know the result holds for $k-1$, and assume that it does not hold for $k$.
In other words, we assume we know that $m_{1}, m_{2}, \dots, m_{k}$ are necessarily bounded 
in a covering system as in the statement of the theorem but that $m_{k+1}$ can be arbitrarily large.
Note that by the conditions in the theorem, the congruences modulo $m_{1}, m_{2}, \dots, m_{k}$
do not by themselves form a covering system (that is, at least one more congruence is needed).  
Since there are a finite number of choices for the minimal $k$ moduli in a covering system
and an infinite number of choices for the $(k+1)$st smallest modulus, there is some fixed choice
of $m_{1}, m_{2}, \dots, m_{k}$ for which there exist an infinite number of covering systems
$C_{1}, C_{2}, \dots$ satisfying:
\vskip -5pt
\centerline{\parbox[t]{11.2cm}{\begin{enumerate}
\item[(i)]
For each $i \ge 1$, the smallest $k$ moduli appearing in congruences in $C_{i}$ are $m_{1}, m_{2}, \dots, m_{k}$.
\item[(ii)]
For each $i \ge 1$, the congruences modulo $m_{1}, m_{2}, \dots, m_{k}$ in $C_{i}$ do not by themselves form a covering system.
\item[(iii)]
Let $m_{t}(C_{i})$ be the $t^{\rm th}$ smallest modulus of $C_{i}$ and $M(C_{i})$ the maximal
modulus of $C_{i}$.  Then $m_{k+1}(C_{1}) > B(1)$ and
\[
m_{k+1}(C_{i}) > \max\{ B(1), M(C_{i-1}) \} \quad \text{ for each } i \ge 2.
\]
\end{enumerate}}}

In (iii), the condition $m_{k+1}(C_{1}) > B(1)$ implies $M(C_{1}) > B(1)$ and, consequently, $\max\{ B(1), M(C_{i-1}) \} = M(C_{i-1})$ for each $i \ge 2$.  
Nevertheless, the inequality as written in (iii) serves the purpose of emphasizing the information we want to use in our argument.

The idea is to obtain a contradiction by constructing a covering of the integers using congruences
with distinct moduli all $> B(1)$, which will contradict the definition of $B(1)$ (and \cite{ref9}).
We will do this by covering one residue class modulo $m_{1} m_{2} \cdots m_{k}$ for each covering system
$C_{1}, C_{2}, \ldots, C_{m_{1}m_{2} \cdots m_{k}}$.  More precisely, for each $i$, we will show that 
the congruences in $C_{i}$ with moduli $> m_{k}$ can be used to cover one residue class modulo $m_{1} m_{2} \cdots m_{k}$.
Given (iii) above, the set of moduli $> m_{k}$ in $C_{i}$ are disjoint for different $i$, so that the congruences used as
$i$ varies involve distinct moduli.

Fix $i$, and write the congruences in $C_{i}$ as in \eqref{congrs}.  
Note that the $a_{j}$ and $m_{j}$ appearing there depend on $i$, 
but we will suppress that dependence here noting again that $i$ is fixed.
Let $b$ be a fixed arbitrary integer, and suppose we wish to find $r - k$ congruences with moduli
$m_{k+1}, \ldots, m_{r}$ such that every integer which is $b$ modulo $m_{1} m_{2} \cdots m_{k}$ 
satisfies at least one of the $r-k$ congruences.

Each congruence $x \equiv a_{j} \pmod{m_{j}}$ in \eqref{congrs} restricted to $1 \le j \le k$ 
is equivalent to $m_{1} m_{2} \cdots m_{k}/m_{j}$ congruences modulo $m_{1} m_{2} \cdots m_{k}$.
In other words, each congruence $x \equiv a_{j} \pmod{m_{j}}$, with $1 \le j \le k$, covers precisely $m_{1} m_{2} \cdots m_{k}/m_{j}$
residue classes of integers modulo $m_{1} m_{2} \cdots m_{k}$.
By (ii) above, the first $k$ congruences in \eqref{congrs} do not form a covering system,
so there is an $a \in \mathbb Z$ for which no integer satisfying
\begin{equation}\label{cong}
x \equiv a \pmod{m_{1} m_{2} \dots m_{k}}
\end{equation}
satisfies one of the first $k$ congruences in \eqref{congrs}.  
Each integer satisfying \eqref{cong} therefore satisfies at least one of the congruences
\[
x \equiv a_{j} \pmod{m_{j}}, \quad \text{ where } k+1 \le j \le r.
\]
We claim that each integer which is $b$ modulo $m_{1} m_{2} \dots m_{k}$
necessarily satisfies at least one of the congruences
\[
x \equiv a_{j} - a + b \pmod{m_{j}}, \quad \text{ where } k+1 \le j \le r.
\]
Indeed, for each integer $t$, we know that 
$a + t m_{1} m_{2} \dots m_{k}$ satisfies \eqref{cong}, so that 
\[
a + t m_{1} m_{2} \dots m_{k} \equiv a_{j} \pmod{m_{j}}, \quad \text{ for some } k+1 \le j \le r.
\]
By rewriting, for each $t \in \mathbb Z$, we get
\[
b + t m_{1} m_{2} \dots m_{k} \equiv a_{j} - a + b \pmod{m_{j}}, \quad \text{ for some } k+1 \le j \le r.
\]
This implies what was claimed.

Thus, for each $i$, we can choose a residue class modulo $m_{1} m_{2} \dots m_{k}$ and 
cover the integers in that residue class using congruences with the moduli from $C_{i}$ which are
$> m_{k}$.  We can therefore cover all
the residue classes modulo $m_{1} m_{2} \dots m_{k}$ by using the moduli $> m_{k}$ from the 
congruences in
\[
C_{1}, C_{2}, \dots, C_{m_{1} m_{2} \dots m_{k}}.
\]
As noted earlier, we deduce from (iii) above that these moduli are all distinct and $> B(1)$, contradicting
the case $k = 0$ established in \cite{ref9} and finishing the proof.



\bibliography{sn-bibliography}


\end{document}